\newtheorem{thm}{Theorem}
\newtheorem{prop}[thm]{Proposition} 
\newtheorem{rem}[thm]{Remark}
\newtheorem{defi}[thm]{Definition}
\newtheorem{exa}[thm]{Example}
\definecolor{amber}{rgb}{1.0, 0.75, 0.0} 
\definecolor{americanrose}{rgb}{1.0, 0.01, 0.24}
\definecolor{amber(sae/ece)}{rgb}{1.0, 0.49, 0.0}
\begin{document}
\setlength{\baselineskip}{16pt}
\title{RSK and Quantum Symmetric Functions: A Combinatorial Approach}
\author{Eddy Pariguan{\footnote{ Pontificia Universidad Javeriana. Bogotá. Colombia.
\texttt{epariguan@javeriana.edu.co} }} and   Jhoan  Sierra V.{\footnote{ Pontificia Universidad Javeriana. Bogotá.
 Colombia. \texttt{jhoansierra@javeriana.edu.co} }}}
\maketitle
\begin{abstract}
We explore an application of the Robinson–Schensted–Knuth (RSK) algorithm in the context of the quantum product of multi-symmetric functions. After reviewing the combinatorial foundations of quantum symmetric functions, we establish connections with transportation polytopes. Our approach highlights the combinatorial richness underlying the star product of symmetric functions.

\end{abstract}

\section*{Introduction}

Deformation quantization provides a powerful framework for understanding the transition from classical to quantum mechanics by deforming the commutative algebra of classical observables into a non‑commutative one. Rather than replacing functions with operators, deformation quantization retains the algebra of functions but modifies their product via a formal parameter \(\hbar\), resulting in the so‑called star product. A major breakthrough in this area was Kontsevich’s general solution to the deformation quantization problem for arbitrary Poisson manifolds \cite{Kon}.

In the foundational work on flat phase space, Moyal (and earlier Groenewold) provided an explicit formula for this star product—now known as the Moyal product—well before Kontsevich’s general construction \cite{Groenewold,Moyal}.
In parallel, the formalism of deformation quantization was systematically developed by Bayen, Flato, Fronsdal, Lichnerowicz, and Sternheimer, who established its physical motivation and mathematical foundations in the symplectic setting \cite{Bayen}. Moreover, alternative geometric approaches have since been proposed, such as those by de Wilde and Lecomte \cite{deWildeLecomte} and Fedosov \cite{Fedosov}.

The algebra of quantum symmetric functions arises naturally as the deformation quantization of the Poisson orbifold \(\mathbb{R}^{2n}/S_n\). In \cite{DP2}, Díaz and Pariguan provided explicit combinatorial formulae for the quantum product of multisymmetric functions, building on Vaccarino’s classical description of these functions via elementary generators \cite{Vac}. Classical multisymmetric functions were also studied earlier in the context of invariant theory and combinatorics, including contributions by Dalbec in the 1990s \cite{Dalbec}.

The main goal of this article is to provide a detailed combinatorial and computational approach to the structure of the star product of elementary multisymmetric functions. Our analysis is based on the decomposition of the quantum product into a sum over certain cubical matrices \(\Gamma \in Q(\alpha, \beta, n, m)\), which satisfy prescribed marginal conditions. We reinterpret these objects using  transportation polytopes and introduce the notions of \emph{support} and \emph{lifting}, which connect the quantum setting to classical matrices in \(L(\alpha, \beta, n)\).

One of the main contributions of this paper is the adaptation of a variant of the Robinson-Schensted-Knuth (RSK) algorithm tailored for application in the quantum case.To this end, we introduce a new class of combinatorial objects, called \emph{3‑words}, which encode cubical matrices in a form amenable to RSK‑type constructions. We show that these 3‑words provide an efficient way to reconstruct and compute the relevant matrices involved in the quantum product. This reformulation proves particularly suitable for implementation via Python-based algorithms.

We begin by recalling Vaccarino’s representation of the ring of multisymmetric functions, which serves as the foundation for the quantum constructions developed in \cite{DP2}. The elementary multisymmetric functions form a basis for the classical ring, and their combinatorial structure is encoded by matrices $\gamma$ satisfying marginal constraints. These matrices will later reappear, in deformed form, in the quantum setting.

Let $\mathbb{K}[x_{11},\cdots, x_{1d},\cdots,x_{n1},\cdots, x_{nd} ]^{\mathbb{S}_n}$ denote the ring of multisymmetric functions, that is, the subring of $\mathbb{K}[x_{ij}]$ invariant under the diagonal action of the symmetric group $\mathbb{S}_n$ on the $n$ copies of $\mathbb{K}^d$. 

Given $p = (p_1, \ldots, p_a) \in \mathbb{K}[y_1,\ldots,y_d]^a$ and a multi-index $\alpha \in \mathbb{N}^a$ such that $|\alpha| \leq n$, the elementary multisymmetric function $e_{\alpha}(p)$ is defined by the generating identity:
\[
\prod_{i=1}^{n} \left(1 + p_1(i)t_1 + \cdots + p_a(i)t_a \right) 
= \sum_{|\alpha|\leq n} e_{\alpha}(p)\, t^\alpha,
\]
where to compute $ p_j(i)$, each variable $ y_i $ in $p_j $ is replaced with the corresponding variable $ x_{ij}$ for  $j\in[d]$.

The set of functions $\{e_\alpha(p)\}$ forms a basis of the ring of multisymmetric functions. Moreover, their classical product satisfies the following formula.

\begin{thm}\label{classicalProductVaccarino}
Let $\alpha \in \mathbb{N}^a$ and $\beta \in\mathbb{N}^b$ with $|\alpha|, |\beta| \leq n$, and let $p = (p_1,\ldots,p_a)$, $q = (q_1,\ldots,q_b)$ be polynomials in $\mathbb{K}[y_1,\ldots,y_d]$. Then:
\[
e_{\alpha}(p) \cdot e_{\beta}(q) = \sum_{\gamma \in L(\alpha, \beta, n)} e_{\gamma}(p, q, pq),
\]
where $(p,q,pq)$ denotes the concatenation of the lists $p$, $q$, and all pairwise products $p_i q_j$, and where $L(\alpha, \beta, n)$ is the set of matrices $\gamma$ with entries in $\mathbb{N}$ indexed by $(\{0\} \cup [a]) \times (\{0\} \cup [b])$ such that:
\begin{itemize}
    \item $\gamma_{00} = 0$,
    \item $\sum_{i=0}^a \sum_{j=0}^b \gamma_{ij} \leq n$,
    \item $\sum_{j=0}^b \gamma_{ij} = \alpha_i$ for $i \in [a]$,
    \item $\sum_{i=0}^a \gamma_{ij} = \beta_j$ for $j \in [b]$.
\end{itemize}
\end{thm}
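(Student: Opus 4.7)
The plan is to prove this by manipulating the defining generating function. Starting from the product $e_\alpha(p)\cdot e_\beta(q)$, I would write each factor as a coefficient extraction from its generating identity:
\[
\prod_{i=1}^n\Bigl(1+\sum_{j\in[a]}p_j(i)t_j\Bigr)\;\cdot\;\prod_{i=1}^n\Bigl(1+\sum_{k\in[b]}q_k(i)s_k\Bigr)=\sum_{\alpha,\beta}e_\alpha(p)e_\beta(q)\,t^\alpha s^\beta,
\]
using disjoint formal variables $t=(t_1,\dots,t_a)$ and $s=(s_1,\dots,s_b)$. The key observation is that these two products can be merged factor by factor over the common index $i\in[n]$, which is the combinatorial heart of the argument.

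Next I would expand the merged $i$-th factor:
\[
\Bigl(1+\sum_{j}p_j(i)t_j\Bigr)\Bigl(1+\sum_{k}q_k(i)s_k\Bigr)=1+\sum_{j\in[a]}p_j(i)t_j+\sum_{k\in[b]}q_k(i)s_k+\sum_{(j,k)\in[a]\times[b]}p_j(i)q_k(i)\,t_js_k.
\]
Introducing the concatenated list $(p,q,pq)$ of length $a+b+ab$ together with the corresponding concatenated list of monomial variables $(t_j)_{j\in[a]}$, $(s_k)_{k\in[b]}$, $(t_js_k)_{(j,k)\in[a]\times[b]}$, this expression is precisely the $i$-th factor appearing in the generating identity that defines $e_\gamma(p,q,pq)$. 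Therefore
\[
\prod_{i=1}^n\Bigl(1+\sum_{j}p_j(i)t_j\Bigr)\Bigl(1+\sum_{k}q_k(i)s_k\Bigr)=\sum_{|\gamma|\leq n}e_\gamma(p,q,pq)\,t^{r(\gamma)}s^{c(\gamma)},
\]
where the exponent on $t_j$ is the sum $r(\gamma)_j=\gamma_{j0}+\sum_{k\in[b]}\gamma_{jk}$ coming from the pure $t_j$ terms together with the mixed $t_js_k$ terms, and symmetrically $c(\gamma)_k=\gamma_{0k}+\sum_{j\in[a]}\gamma_{jk}$.

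Finally I would compare coefficients of $t^\alpha s^\beta$ on both sides. A monomial $t^{r(\gamma)}s^{c(\gamma)}$ contributes to $t^\alpha s^\beta$ exactly when $r(\gamma)=\alpha$ and $c(\gamma)=\beta$, which translates the index set into the four marginal conditions defining $L(\alpha,\beta,n)$: the normalization $\gamma_{00}=0$ (no factor was picked as the constant $1$ simultaneously and counted), the total budget $\sum_{i,j}\gamma_{ij}\leq n$ from $|\gamma|\leq n$, and the row/column sums giving $\alpha$ and $\beta$. Collecting these yields the stated sum over $L(\alpha,\beta,n)$.

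The main obstacle is purely bookkeeping: organizing the expansion so that the indexing by pairs $(i,j)\in(\{0\}\cup[a])\times(\{0\}\cup[b])$ is seen as recording, at each factor in $[n]$, which of the four possible term-types (constant $1$, pure $t_j$, pure $s_k$, or mixed $t_js_k$) is selected. Once this correspondence between multinomial expansion patterns and matrices in $L(\alpha,\beta,n)$ is made explicit, the identification of coefficients is immediate from the uniqueness of the generating function expression for elementary multisymmetric functions of the concatenated polynomial list $(p,q,pq)$.
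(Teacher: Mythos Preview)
The paper does not actually prove this theorem; it is stated as background, attributed to Vaccarino \cite{Vac}, and used without proof as the classical starting point for the quantum deformation. Your generating-function argument is correct and is essentially the standard proof of Vaccarino's formula: multiply the two defining generating series in disjoint sets of formal variables $t=(t_j)$ and $s=(s_k)$, merge the products over $i\in[n]$ factor by factor, observe that each merged factor is exactly the generic linear factor for the elementary multisymmetric functions of the concatenated list $(p,q,pq)$ in the monomials $t_j,\,s_k,\,t_js_k$, and then compare coefficients of $t^\alpha s^\beta$.

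One small point worth making explicit in your write-up: the monomials $t_js_k$ are not algebraically independent of the $t_j$ and $s_k$, so one cannot extract $e_\gamma(p,q,pq)$ directly as a coefficient after the specialization. What makes the argument work is that you only compare coefficients in the genuinely independent variables $t$ and $s$; different $\gamma$'s with the same row and column marginals $r(\gamma)=\alpha$, $c(\gamma)=\beta$ all collapse to the same monomial $t^\alpha s^\beta$, and that collapsing is precisely the sum over $L(\alpha,\beta,n)$. Your final paragraph indicates you see this, but stating it cleanly removes any doubt. The condition $\gamma_{00}=0$ is purely conventional bookkeeping: the list $(p,q,pq)$ has $a+b+ab$ entries, naturally indexed by $(\{0\}\cup[a])\times(\{0\}\cup[b])\setminus\{(0,0)\}$, and one simply extends $\gamma$ by zero at $(0,0)$.
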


The matrices $\gamma \in L(\alpha, \beta, n)$ encode the combinatorics of how monomials in the variables $p_i$, $q_j$, and $p_i q_j$ combine in the product. They are naturally represented as follows:

\[
\begin{array}{cc}
	\begin{array}{ c c c c}
		\phantom{abc} & \beta_1 & \cdots & \beta_b \\
		\mbox{} & \uparrow & \mbox{}  & \uparrow\\
	\end{array} &  
	\begin{array}{cc}
		\mbox{}  & \mbox{} \\
		\mbox{}  & \mbox{} \\
	\end{array}\\
	\begin{bmatrix}
		0          &\gamma_{01}&\cdots&\gamma_{0b}\\
		\gamma_{10}&\gamma_{11}&\cdots&\gamma_{1b}\\
		\vdots & \vdots & \ddots & \vdots\\
		\gamma_{a0}&\gamma_{a1}&\cdots&\gamma_{ab}
	\end{bmatrix} & 
	\begin{array}{cc}
		\mbox{} & \mbox{}\\
		\to & \alpha_1\\
		\mbox{}  & \vdots\\
		\to & \alpha_a
	\end{array}
\end{array}
\]

This representation will play a key role in the deformation process developed in later sections, where matrices $\gamma$ will be generalized to cubical matrices $\Gamma$ parametrizing quantum products.

\section{Quantum product and a combinatorial example}

After presenting the classical product of multisymmetric functions, we now turn to its quantum deformation, introduced in \cite{DP2}, which extends Vaccarino’s formula using Kontsevich’s star product on the phase space $\mathbb{R}^2$. The main result gives an explicit combinatorial formula involving cubical matrices $\Gamma$, generalizing the classical matrices $\gamma$. 

To illustrate the construction, we include a computational example of the quantum star product, highlighting the role of the deformation parameter $\hbar$ and the algorithmic tools involved.

It is well known that the Weyl algebra is isomorphic to the deformation quantization of the algebra of polynomial functions on $\mathbb{R}^2$ endowed with the canonical Poisson structure. Consequently, the deformation quantization of the space $(\mathbb{R}^2)^n/S_n$, where $S_n$ denotes the symmetric group, can be identified with the algebra of quantum symmetric functions
$$(\mathbb{R}[x_1,\cdots,x_n,y_1,\cdots,y_n][[\hbar]],\star)^{S_n},$$
or, equivalently, with the symmetric powers of the Weyl algebra, $(W^{\otimes n})/S_n$.

We now state the main result from \cite{DP2}, which provides an explicit formula for the star product of multisymmetric functions. While we do not reproduce the proof here, we use this theorem as the starting point for the present work. Our focus will be on a specific family of matrices that naturally arise in the structure of the formula, and which play a central role in the analysis developed throughout this paper.

\begin{thm}\label{TeoImportante}

    Consider $\mathbb{R}^{2}$ with its canonical Poisson structure. Fix $a, b\in\mathbb{N}^{+}$ and let $$
    p=(p_1,\cdots,p_a)=
    (x^{c_1}y^{d_1},\cdots,x^{c_a}y^{d_a})\in \mathbb{R}[x,y]^{a},$$ $$
    q=(q_1,\cdots,q_b)=(x^{f_1}y^{g_1},\cdots,x^{f_b}y^{g_b} ) \in \mathbb{R}[x, y]^b
    .$$
For $\alpha \in \mathbb{N}^a$ and $\beta \in \mathbb{N}^b$, the following identity holds:
$$
e_\alpha(p) \star e_\beta(q) = \sum_{m=0}^{\infty} \left( \sum_{\Gamma \in Q(\alpha,\beta,n,m)} e_\Gamma(B(p,q)) \right) \hbar^m,
$$
where:
\begin{itemize}
    \item $B(p,q) = (p, q, p_1 \star q_1, \cdots, p_l \star q_r, \cdots, p_a \star q_b)$, and
    $$
    p_l \star q_r = x^{c_l}y^{d_r} \star x^{f_l}y^{g_r} =
    \sum_{k=0}^{\min(d_r, f_l)} \binom{d_r}{k}(f_l)_{k} x^{c_l + f_l - k} y^{d_r + g_r - k} \hbar^k.
    $$
\item $Q(\alpha,\beta,n,m)$ is the subset of  $Map([0,a]\times[0,b]\times\mathbb{N},\mathbb{N})$ consisting of cubical matrices  $\Gamma:[0,a]\times[0,b]\times\mathbb{N}\longrightarrow\mathbb{N}$ such that
        	 \begin{enumerate}
        \item $\Gamma_{00}^{k} = 0$ for all $k \geq 0$; and if either $i = 0$ or $j = 0$, then $\Gamma_{ij}^{k} = 0$ for all $k \geq 1$,
        \item $|\Gamma| = \sum_{i=0}^{a} \sum_{j=0}^{b} \sum_{k=0}^{\infty} \Gamma_{ij}^{k} \leq n$, and $\sum_{i=0}^{a} \sum_{j=0}^{b} \sum_{k=0}^{\infty} k \Gamma_{ij}^{k} = m$,
        \item $\sum_{j=0}^{b} \sum_{k=0}^{\infty} \Gamma_{ij}^{k} = \alpha_i$ for all $i \in [a]$, and $\sum_{i=0}^{a} \sum_{k=0}^{\infty} \Gamma_{ij}^{k} = \beta_j$ for all $j \in [b]$.
    \end{enumerate}
	\end{itemize}
\end{thm}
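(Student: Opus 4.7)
My strategy is to generalize the generating-function proof of the classical Vaccarino formula (Theorem~\ref{classicalProductVaccarino}) by inserting the Moyal star product at each of the $n$ copies of $\mathbb{R}^{2}$ and then bookkeeping the extra power-of-$\hbar$ index with a ``depth'' coordinate $k$.

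First, I would use the generating-function definition of the elementary multisymmetric functions to write $e_{\alpha}(p) \star e_{\beta}(q)$ as
$$[t^{\alpha} s^{\beta}]\, \prod_{i=1}^{n}\Bigl(1 + \sum_{l} p_l(i)\, t_l\Bigr) \star \prod_{i=1}^{n}\Bigl(1 + \sum_{r} q_r(i)\, s_r\Bigr),$$
where the coefficient extraction commutes with $\star$ since the $t_l, s_r$ are formal parameters. The key structural observation is that the canonical Poisson tensor on $\mathbb{R}^{2n}$ is $\sum_{i=1}^{n} \partial_{x_i} \wedge \partial_{y_i}$, so the bidifferential operator defining the Moyal product splits as a sum over sites; direct inspection of its exponential form shows that $(\prod_i F_i) \star (\prod_i G_i) = \prod_i (F_i \star_i G_i)$ whenever each $F_i, G_i$ depends only on $(x_i, y_i)$. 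This reduces the computation to a product over $i \in [n]$ of per-site star products.

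Next, I would expand each site factor using bilinearity together with the identities $1 \star f = f \star 1 = f$, obtaining the four-block decomposition $1 + \sum_l p_l(i) t_l + \sum_r q_r(i) s_r + \sum_{l,r}(p_l \star q_r)(i)\, t_l s_r$. Substituting the explicit Moyal formula for $p_l \star q_r$ refines the mixed block into a sum over $k \geq 0$ of monomials weighted by $\binom{d_r}{k}(f_l)_k \hbar^k$. Expanding the product over $i \in [n]$ then amounts to assigning each site to exactly one of the slots $(0,0)$ (constant), $(l,0)$ (pure $p_l$), $(0,r)$ (pure $q_r$), or $(l,r,k)$ (the $k$-th piece of $p_l \star q_r$); the resulting counts $\Gamma_{lr}^k$ realize precisely a cubical matrix in $Q(\alpha,\beta,n,m)$. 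The defining conditions of $Q(\alpha,\beta,n,m)$ translate cleanly: $\Gamma_{00}^{k}=0$ because constant contributions are not recorded, $\Gamma_{l0}^{k}=\Gamma_{0r}^{k}=0$ for $k\geq 1$ because pure $p$- or $q$-terms carry no $\hbar$, the row/column marginals enforce the correct $t^{\alpha}$ and $s^{\beta}$ degrees, $|\Gamma|\leq n$ reflects that at most $n$ sites are used, and $\sum k\, \Gamma_{lr}^{k}=m$ tracks the total power of $\hbar$.

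The main obstacle is justifying the site-factorization of the star product (the step from $\mathbb{R}^{2n}$ down to a product over copies of $\mathbb{R}^{2}$); this rests on the explicit bidifferential form of the Moyal product on a product Poisson manifold and is essentially the only place where quantum information enters beyond the classical argument. Once this is in hand, identifying the collected sums with $e_{\Gamma}(B(p,q))$, defined through the analogous generating function with $B(p,q)$ viewed as a tuple indexed by cubical coordinates, is a straightforward refinement of Vaccarino's classical combinatorial proof, and the $S_n$-invariance of both sides follows automatically from the $S_n$-equivariance of $\star$ on $\mathbb{R}^{2n}$.
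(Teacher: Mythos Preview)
The paper does not prove this theorem. It is quoted verbatim from \cite{DP2} (D\'iaz--Pariguan), and the authors say explicitly, just before stating it, ``While we do not reproduce the proof here, we use this theorem as the starting point for the present work.'' There is therefore no in-paper argument to compare your proposal against.

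For what it is worth, your outline is the natural route and is consistent with how the result is obtained in the cited source: deform the generating-function proof of Vaccarino's classical formula (Theorem~\ref{classicalProductVaccarino}) by inserting the Moyal product, use the site-factorization $(\prod_i F_i)\star(\prod_i G_i)=\prod_i(F_i\star_i G_i)$ coming from the block-diagonal Poisson tensor on $(\mathbb{R}^{2})^{n}$, expand each per-site factor into the four blocks $1$, $p_l$, $q_r$, and the $k$-th $\hbar$-component of $p_l\star q_r$, and read off the cubical matrix $\Gamma$ from the resulting multiplicities. The marginal, support, and weight constraints defining $Q(\alpha,\beta,n,m)$ then arise exactly as you describe. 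The only point requiring care is that $e_{\Gamma}(B(p,q))$ must be interpreted with $B(p,q)$ indexed by the same cubical coordinates $(i,j,k)$ that label the entries of $\Gamma$; once that identification is fixed your sketch is sound.
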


To illustrate the application of Theorem \ref{TeoImportante}, we now present a comprehensive computation of a star product between two elementary multisymmetric functions. This process involves solving a system of algebraic constraints that determine the admissible matrices $\Gamma$, making the calculation too intricate to perform manually. To address this, we developed a Python implementation of the method. The corresponding code is available in our GitHub repository~\cite{sierra2025}, enabling readers to reproduce the results and experiment with additional cases.

We consider the specific case $\alpha = (1,1)$, $\beta = (2,1)$, with inputs $p = (x^2y, x^3y)$ and $q = (x^3, x^2y^2)$. Setting $n = 4$, we compute the quantum product
    $$e_\alpha(p)\star e_\beta(q)=\sum_{m=0}^{\infty}\left(\sum_{\Gamma\in Q(\alpha,\beta,n,m)} e_\Gamma(B(p,q))\right)\hbar^m.$$
    First we find $B(p,q)$, which is given by
$$B(p,q)=({x^2y}, {x^3y}, {x^3}, {x^2y^2}, {x^5y}, {x^4}, {x^4y^3}, {x^3y^2}, {x^6y}, {x^5}, {x^5y^3}, {x^4y^2}).$$

Next, we determine the set of cubical matrices $\Gamma \in Q(\alpha,\beta,n,m)$ that satisfy the required conditions. Each such $\Gamma$ consists of a infinite collection of matrices indexed by $k \in \mathbb{N}$, and must be interpreted as a three-dimensional array whose entries $\Gamma_{ij}^k$. For each value of $m$, we identify all such matrices $\Gamma$ and describe their structure both in cubical form and via their corresponding vector representations.

As an example, let $m = 0$. One such matrix is:

    $$\Gamma = \left[
        \begin{bmatrix}
            0 & \cellcolor{green}{1} & \cellcolor{green}{0}\\
            \cellcolor{yellow}{0} & \cellcolor{green}{1} & \cellcolor{green}{0}\\
            \cellcolor{yellow}{0} & \cellcolor{green}{0} & \cellcolor{green}{1}
        \end{bmatrix}\right]= \left[
        \begin{bmatrix}
            0 & \cellcolor{green}{1} & \cellcolor{green}{0}\\
            \cellcolor{yellow}{0} & \cellcolor{green}{1} & \cellcolor{green}{0}\\
            \cellcolor{yellow}{0} & \cellcolor{green}{0} & \cellcolor{green}{1}
        \end{bmatrix},
        \begin{bmatrix}
            0 & 0 & 0\\
            0 & \cellcolor{green}{0} & \cellcolor{green}{0}\\
            0 & \cellcolor{green}{0} & \cellcolor{green}{0}
        \end{bmatrix}, \cdots
    \right],$$
    and its corresponding vector representation is
    $$(\colorbox{yellow}{0, 0},\colorbox{green}{1, 0, 1, 0, 0, 1, 0, 0, 0, 0}).$$

 Note that, although we are working with matrices whose entries $\Gamma_{ij}^{k}$ range over all $k\geq 0$, we only consider terms up to the highest index $k$ for which at least one entry is nonzero. 
Now let $m = 1$. One such matrix is:
    $$\Gamma = \left[
        \begin{bmatrix}
            0 & \cellcolor{green}{1} & \cellcolor{green}{0}\\
            \cellcolor{yellow}{0} & \cellcolor{green}{1} & \cellcolor{green}{0}\\
            \cellcolor{yellow}{0} & \cellcolor{green}{0} & \cellcolor{green}{0}
        \end{bmatrix},
        \begin{bmatrix}
           0 & 0 & 0\\
           0 & \cellcolor{green}{0} & \cellcolor{green}{0}\\
           0 & \cellcolor{green}{0} & \cellcolor{green}{1} 
        \end{bmatrix}\right],
    $$ 
 and its corresponding vector representation is
    $$(\colorbox{yellow}{0, 0},\colorbox{green}{1, 0, 1, 0, 0, 0, 0, 0, 0, 1}).$$

 In order to represent each cubical matrix $\Gamma$ as a vector, we adopt the following convention. First, the yellow column is transposed and placed at the beginning of the vector.Then the remaining entries in the first matrix (highlighted in green) are read row by row, from top to bottom and right to left, and concatenated. The same procedure is applied to the green entries of each subsequent matrix in $\Gamma$. This yields a finite vector of integers that encodes the structure of $\Gamma$ in a compact and computationally convenient format.

We now compute the full expansion by varying $m$ from $0$ to $\infty$, as prescribed in the formula. Each value of $m$ corresponds to the degree of the deformation parameter $\hbar^m$ in the expansion. However, we observe that for $ m \geq 3$, although the sets $ Q(\alpha, \beta, n, m)$ may contain elements, none of them contribute nonzero terms to the star product. Therefore, only the cases $m = 0, 1, 2 $ need to be considered. The vector solutions for each of these values of $m$ are listed below, grouped by degree in $\hbar$.
    \begin{itemize}
     \item \textbf{Vectors associated with degree $\hbar^0$ \ (i.e., $ m = 0$)}:           
                
            $$ \begin{array}{cc}
                (0, 0, 1, 0, 1, 0, 0, 1, 0, 0, 0, 0),&
                (0, 0, 1, 0, 0, 1, 1, 0, 0, 0, 0, 0),\\
                (0, 0, 0, 1, 1, 0, 1, 0, 0, 0, 0, 0),&
                (1, 0, 2, 0, 0, 0, 0, 1, 0, 0, 0, 0),\\
                (0, 1, 2, 0, 0, 1, 0, 0, 0, 0, 0, 0),&
                (1, 0, 1, 1, 0, 0, 1, 0, 0, 0, 0, 0),\\
                (0, 1, 1, 1, 1, 0, 0, 0, 0, 0, 0, 0).\\
            \end{array}$$
 \item \textbf{Vectors associated with degree $\hbar$ \ (i.e., $ m = 1$)}:           
            $$\begin{array}{cc}
                 (0, 0, 1, 0, 1, 0, 0, 0, 0, 0, 0, 1),&
                 (0, 0, 1, 0, 0, 1, 0, 0, 0, 0, 1, 0),\\
                 (0, 0, 0, 1, 1, 0, 0, 0, 0, 0, 1, 0),&
                 (0, 0, 1, 0, 0, 0, 1, 0, 0, 1, 0, 0),\\
                 (0, 0, 1, 0, 0, 0, 0, 1, 1, 0, 0, 0),&
                 (0, 0, 0, 1, 0, 0, 1, 0, 1, 0, 0, 0),\\
                 (1, 0, 2, 0, 0, 0, 0, 0, 0, 0, 0, 1),&
                 (1, 0, 1, 1, 0, 0, 0, 0, 0, 0, 1, 0),\\
                 (0, 1, 1, 1, 0, 0, 0, 0, 1, 0, 0, 0).
            \end{array}$$
            
                 \item \textbf{Vectors associated with degree $\hbar^2$ \ (i.e., $ m = 2$)}:                       
     $$\begin{array}{cc}
                (0, 0, 1, 0, 0, 0, 0, 0, 1, 0, 0, 1),&
                (0, 0, 1, 0, 0, 0, 0, 0, 0, 1, 1, 0),\\
                (0, 0, 0, 1, 0, 0, 0, 0, 1, 0, 1, 0).
            \end{array}$$
    \end{itemize}
Each of these vectors contributes an elementary multisymmetric function to the star product. To determine the specific function associated with a given vector, we proceed as follows.

Take, for instance, the vector
$(0, 0, 1, 0, 1, 0, 0, 1, 0, 0, 0, 0).$
To identify the corresponding elementary function, we refer to the previously constructed vector $B(p,q)$ and eliminate both the zero entries in the solution vector and their associated positions in $B(p,q)$. That is, we retain only the variables corresponding to the nonzero positions. This yields the following term:    
    $$\begin{array}{cc}
         & e_{(\textcolor{red}{0}, \textcolor{red}{0}, 1, \textcolor{red}{0}, 1, \textcolor{red}{0}, \textcolor{red}{0}, 1, \textcolor{red}{0}, \textcolor{red}{0}, \textcolor{red}{0}, \textcolor{red}{0})}(\textcolor{red}{x^2y},
         \textcolor{red}{x^3y},
         {x^3},
         \textcolor{red}{x^2y^2},
         {x^5y},
         \textcolor{red}{x^4},
         \textcolor{red}{x^4y^3},
         {x^3y^2},
         \textcolor{red}{x^6y},
         \textcolor{red}{x^5},
         \textcolor{red}{x^5y^3},
         \textcolor{red}{x^4y^2}) \\
         & = e_{(1,1,1)}(x^{3},x^{5}y,x^{3}y^{2}).
    \end{array}$$

This procedure applies to all solution vectors, systematically generating the elementary multisymmetric terms that contribute to the quantum product at each order of $\hbar$.

$$\begin{array}{rcl}
    e_{(1,1)}(x^2y,x^3y) \star e_{(2,1)}(x^3,x^2y^2)
    & = & e_{(1,1,1)}(x^{3},x^{5}y,x^{3}y^{2})\\
    + e_{(1,1,1)}(x^{3},x^{4},x^{4}y^{3}) &
    + & e_{(1,1,1)}(x^{2}y^{2},x^{5}y,x^{4}y^{3})\\
    + e_{(1,2,1)}(x^{2}y,x^{3},x^{3}y^{2})
    & + & e_{(1,2,1)}(x^{3}y,x^{3},x^{4})\\
    + e_{(1,1,1,1)}(x^{2}y,x^{3},x^{2}y^{2},x^{4}y^{3})
    & + & e_{(1,1,1,1)}(x^{3}y,x^{3},x^{2}y^{2},x^{5}y)\\
    + e_{(1,1,1)}(x^{3},x^{5}y,x^{4}y^{2})\hbar
    & + & e_{(1,1,1)}(x^{3},x^{4},x^{5}y^{3})\hbar\\
    + e_{(1,1,1)}(x^{2}y^{2},x^{5}y,x^{5}y^{3})\hbar
    & + & e_{(1,1,1)}(x^{3},x^{4}y^{3},x^{5})\hbar\\
    + e_{(1,1,1)}(x^{3},x^{3}y^{2},x^{6}y)\hbar
    & + & e_{(1,1,1)}(x^{2}y^{2},x^{4}y^{3},x^{6}y)\hbar\\
    + e_{(1,2,1)}(x^{2}y,x^{3},x^{4}y^{2})\hbar
    & + & e_{(1,1,1,1)}(x^{2}y,x^{3},x^{2}y^{2},x^{5}y^{3})\hbar\\
    + e_{(1,2,1)}(x^{3}y,x^{3},x^{5})\hbar
    & + & e_{(1,1,1,1)}(x^{3}y,x^{3},x^{2}y^{2},x^{6}y)\hbar\\
    + e_{(1,1,1)}(x^{3},x^{6}y,x^{4}y^{2})\hbar^{2}
    & + & e_{(1,1,1)}(x^{3},x^{5},x^{5}y^{3})\hbar^{2}\\
    + e_{(1,1,1)}(x^{2}y^{2},x^{6}y,x^{5}y^{3})\hbar^{2}.
    \end{array}$$

The set \( Q(\alpha, \beta, n, m) \) encodes the combinatorial structure underlying the star product expansion, particularly through the matrices \( \Gamma \) that determine the contributions of mixed monomials. To better understand the quantum deformation of the classical symmetric product, we now turn to a detailed combinatorial analysis of these matrices.

\section{From transportation polytopes to quantum products}
The set $ Q(\alpha, \beta, n, m)$ can be understood as the collection of integer matrices satisfying certain marginal conditions, reminiscent of the defining constraints of classical transportation polytopes \cite{Loera}, whose combinatorics provide a natural framework for organizing and interpreting the solutions.

Guided by this intuition, we aim to decompose the set $ Q(\alpha, \beta, n, m)$ into more manageable subsets, such that analyzing their individual structure provides insight into the global configuration. This motivates the introduction of the notion of \emph{support} for a matrix $ \Gamma \in Q(\alpha, \beta, n, m) $, inspired by the analogous concept in the classical setting.

\begin{defi}\label{defiSoporte}
Let \( s \in \mathbb{N}\). The \emph{support} of \( Q(\alpha, \beta, n, m) \) at level \( s \), denoted by \( Q^{s}(\alpha, \beta, n, m) \), is the subset of matrices \( \Gamma = (\Gamma_{ij}^k) \in Q(\alpha, \beta, n, m) \) such that:
\begin{itemize}
    \item \( \Gamma_{ij}^k = 0 \) for all \( k > s \) and for all \( i, j \),
    \item and there exists at least one pair \( (i,j) \) such that \( \Gamma_{ij}^s \neq 0 \).
\end{itemize}
\end{defi}

\begin{defi}
Let $\Gamma = (\Gamma_{ij}^k)$ be a cubical matrix in $Q(\alpha, \beta, n, m)$. For each $ k \in  \mathbb{N}$ , the \emph{$k$-th level} of $\Gamma$, denoted $ \Gamma^k $, is the matrix $ (\Gamma_{ij}^k)$ formed by fixing the index $k$  and varying $i, j$.
\end{defi}

Definition \ref{defiSoporte} can be understood as selecting those $\Gamma \in Q(\alpha,\beta,n,m)$ for which $\Gamma^k = 0$ for every level $k > s$, as depicted in Figure \ref{supposrStar}.

 \begin{center}
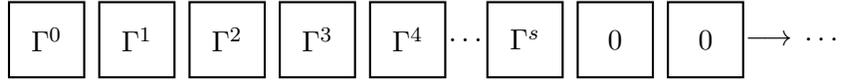

	\begin{tikzpicture}
		\def\startX{0}
		\def\startY{0}
		\def\size{1} 
		\def\gap{0.2} 
		
		\foreach \i in {0,1,2,3,4} {
			\draw[thick] (\startX+\i*\size+\i*\gap,\startY) rectangle ++(\size,\size);
			\node at (\startX+\i*\size+\i*\gap+\size/2,\startY+\size/2) {$\Gamma^{\i}$};
		}
		
		\node at (\startX+5.2*\size+4*\gap+\gap/2,\startY+\size/2) {\dots};
		
		\draw[thick] (\startX+5.3*\size+5.3*\gap,\startY) rectangle ++(\size,\size);
		\node at (\startX+5.3*\size+5.3*\gap+\size/2,\startY+\size/2) {$\Gamma^{s}$};
		
		\draw[thick] (\startX+6.3*\size+6.3*\gap,\startY) rectangle ++(\size,\size);
		\node at (\startX+6.3*\size+6.3*\gap+\size/2,\startY+\size/2) {$0$};
		
		\draw[thick] (\startX+7.3*\size+7.3*\gap,\startY) rectangle ++(\size,\size);
		\node at (\startX+7.3*\size+7.3*\gap+\size/2,\startY+\size/2) {$0$};

		\node at (\startX+8*\size+8*\gap+\size/2,\startY+\size/2) {$\longrightarrow$};
		
		\node[anchor=west] at (\startX+8.7*\size+8.7*\gap,\startY+\size/2) {$\cdots$};
	\end{tikzpicture}
    \captionof{figure}{Representation of the support $Q^{s}$ of $Q(\alpha,\beta,n,m)$ at level $s$.}\label{supposrStar}
\end{center}

\begin{exa}
Let $\alpha = (1,1)$, $\beta = (2,1)$, and $n = 4$. Consider  $\Gamma \in Q(\alpha,\beta,n,0)$ given by
\[
\Gamma = \left[
    \begin{bmatrix}
        0 & 1 & 0\\
        0 & 1 & 0\\
        0 & 0 & 1
    \end{bmatrix}\right]= \left[
    \begin{bmatrix}
        0 & 1 & 0\\
        0 & 1 & 0\\
        0 & 0 & 1
    \end{bmatrix},
    \begin{bmatrix}
        0 & 0 & 0\\
        0 & 0 & 0\\
        0 & 0 & 0
    \end{bmatrix}, \cdots
    \right].
\]
Observe that $\Gamma \in Q^0$. On the other hand, consider the $\Gamma \in Q(\alpha,\beta,n,1)$ defined as
\[
\Gamma = \left[
    \begin{bmatrix}
        0 & 1 & 0\\
        0 & 1 & 0\\
        0 & 0 & 0
    \end{bmatrix},
    \begin{bmatrix}
       0 & 0 & 0\\
       0 & 0 & 0\\
       0 & 0 & 1 
    \end{bmatrix}\right],
\]
then we have $\Gamma \in Q^1$.
\end{exa}

\begin{prop}\label{UDisjunta}
Let $\alpha\in\mathbb{N}^a$, $\beta \in\mathbb{N}^b$,$n,m$ be fixed. Then
$$Q(\alpha,\beta,n,m)=\bigsqcup_{s=0}^m Q^{s}.$$
\end{prop}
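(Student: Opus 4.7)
The plan is to prove the two inclusions and the disjointness by a direct argument, using the key observation that the marginal condition $\sum_{i,j,k} k\,\Gamma_{ij}^{k} = m$ forces the top nonzero level of any $\Gamma\in Q(\alpha,\beta,n,m)$ to lie in $\{0,1,\dots,m\}$.

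First, I would note that the inclusion $\bigsqcup_{s=0}^{m} Q^{s}\subseteq Q(\alpha,\beta,n,m)$ is immediate, since each $Q^{s}$ is defined as a subset of $Q(\alpha,\beta,n,m)$. For the reverse inclusion, take $\Gamma\in Q(\alpha,\beta,n,m)$ (assumed nonzero, which is guaranteed as soon as $\alpha$ or $\beta$ is nonzero, by the marginal conditions~(3)). Since $\sum_{i,j,k} k\,\Gamma_{ij}^{k} = m$ and the entries $\Gamma_{ij}^{k}$ are nonnegative integers, only finitely many levels $\Gamma^{k}$ contain a nonzero entry, so the integer
\[
s(\Gamma) \;=\; \max\bigl\{\,k\in\mathbb{N} \,:\, \Gamma_{ij}^{k}\neq 0 \text{ for some }(i,j)\,\bigr\}
\]
is well defined. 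By construction $\Gamma^{k}=0$ for every $k>s(\Gamma)$ and $\Gamma_{ij}^{s(\Gamma)}\neq 0$ for some $(i,j)$, so $\Gamma\in Q^{s(\Gamma)}$.

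Next, I would check that $s(\Gamma)\le m$. Pick $(i_0,j_0)$ with $\Gamma_{i_0 j_0}^{s(\Gamma)}\ge 1$; then
\[
s(\Gamma) \;\le\; s(\Gamma)\cdot\Gamma_{i_0 j_0}^{s(\Gamma)} \;\le\; \sum_{i,j,k} k\,\Gamma_{ij}^{k} \;=\; m,
\]
which yields $\Gamma\in \bigcup_{s=0}^{m} Q^{s}$ and completes the second inclusion.

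Finally, the disjointness of the union follows from the uniqueness of $s(\Gamma)$: if $\Gamma\in Q^{s}\cap Q^{s'}$ with $s<s'$, then membership in $Q^{s}$ forces $\Gamma_{ij}^{k}=0$ for all $k>s$, in particular at $k=s'$, contradicting the requirement that some $\Gamma_{ij}^{s'}$ be nonzero. The whole argument is essentially a bookkeeping exercise; the only mildly delicate step is verifying the bound $s(\Gamma)\le m$ from condition~(2), which is where the hypothesis that we sum up to $m$ (rather than $\infty$) enters. No genuine obstacle is expected.
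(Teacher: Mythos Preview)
Your proof is correct and follows essentially the same approach as the paper: define the maximal nonzero level, use the weighted marginal condition $\sum_{i,j,k} k\,\Gamma_{ij}^{k}=m$ to bound it by $m$, and derive disjointness from the definition of $Q^{s}$. The paper organizes the argument by separating the cases $m=0$ and $m>0$, while you treat them uniformly via the inequality $s(\Gamma)\le s(\Gamma)\cdot\Gamma_{i_0 j_0}^{s(\Gamma)}\le m$; the content is the same.
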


\begin{proof}
We distinguish two cases.

If $m = 0$, the condition
$$\displaystyle\sum_{i=0}^a\sum_{j=0}^b\sum_{k=1}^{\infty}k\Gamma_{ij}^k = 0$$
implies that $\Gamma_{ij}^k = 0$ for all $k > 0$, so $Q(\alpha,\beta,n,0) = Q^0$.

If $m > 0$, disjointness of the $Q^s$ follows from the definition: if $\Gamma \in Q^s \cap Q^{s'}$ with $s < s'$, then $\Gamma_{ij}^{s'}$ must be both zero and nonzero, a contradiction.

For $s > m$, any $\Gamma \in Q^s$ would have
$$k\Gamma_{ij}^k \leq m \Rightarrow \Gamma_{ij}^k < 1,$$
hence $\Gamma_{ij}^k = 0$, so $Q^s = \emptyset$.

Finally, for any $\Gamma \in Q(\alpha,\beta,n,m)$, we define
$$s = \max\{k \in [m] : \Gamma_{ij}^k \ne 0\}$$
and obtain $\Gamma \in Q^s$. This proves the decomposition.
\end{proof}

By Theorem~\ref{UDisjunta}, it is enough to study each set \( Q^{s} \) separately.

Throughout this section, \( \ell(v) \) denotes the number of entries in the vector \( v \). For instance, \( \ell(1,2,3) = 3 \).

Recall that in Theorem~\ref{TeoImportante}, the expression \( B(p,q) \) takes the form
$$B(p,q)=(p,q,p_1\star q_1,p_1\star q_2,\cdots,p_1\star q_b,\cdots,p_a\star q_b),$$
where each product \( p_i \star q_j \) is computed as follows:
\begin{equation}\label{starMonoid}
\begin{array}{ccc}
     (x^{c}y^{d})\star(x^{f}y^{g}) & = & \displaystyle\sum_{k=0}^{\min(d,f)} B_{k}(x^cy^d,x^fy^g)\hbar^k \\
      & = & \displaystyle\sum_{k=0}^{\min(d,f)}\binom{d}{k}(f)_{k}x^{c+f-k}y^{d+g-k}\hbar^k.
\end{array}
\end{equation}

\begin{prop}\label{ellBpq}
Let $p=(p_1,p_2,\cdots,p_{a})$ and $q=(q_1,q_2,\cdots,q_{b})$. Then
$$\ell(B(p, q))= a + b + \sum_{i=1}^{a} \sum_{j=1}^{b} \left( \min (\deg_y(p_i), \deg_x(q_j)) + 1 \right),$$
where $\deg_x(p_i)$ denotes the degree in the variable $x$ of the monomial $p_i$, and similarly, $\deg_y(q_j)$ is the degree in the variable $y$ of the monomial $q_j$.
\end{prop}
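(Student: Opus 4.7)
The plan is to prove the identity by a direct counting argument that partitions the tuple $B(p,q)$ into three blocks according to its definition. First I would unwind
$B(p,q)=(p,\,q,\,p_1\star q_1,\,\ldots,\,p_a\star q_b)$
into the list $p$, which contributes $a$ entries, the list $q$, which contributes $b$ entries, and the concatenation of all star products $p_i\star q_j$ for $(i,j)\in[a]\times[b]$. This immediately accounts for the $a+b$ summand in the claimed formula and reduces the problem to counting the total number of entries coming from the star-product blocks.

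Next, for each pair $(i,j)$, I would apply formula (\ref{starMonoid}) to the monomials $p_i=x^{c_i}y^{d_i}$ and $q_j=x^{f_j}y^{g_j}$: the expansion of $p_i\star q_j$ is a sum indexed by $k=0,1,\ldots,\min(d_i,f_j)$, and each value of $k$ produces exactly one monomial entry appended to $B(p,q)$. Therefore the block $p_i\star q_j$ contributes $\min(d_i,f_j)+1$ entries to the list. Since by definition $d_i=\deg_y(p_i)$ and $f_j=\deg_x(q_j)$, this rewrites as $\min(\deg_y(p_i),\deg_x(q_j))+1$, and summing over all pairs $(i,j)\in[a]\times[b]$ completes the identity.

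The main (and quite mild) obstacle is conceptual rather than computational: one must emphasize that $B(p,q)$ is a tuple rather than a set or a reduced polynomial, so that each summand arising from a different $k$, or from a different pair $(i,j)$, is recorded as a separate entry even if two such entries happen to produce the same monomial (as in the worked example, where $x^5$ appears via $p_2\star q_1$ alongside other repeated powers). Once this convention is made explicit, no collapsing or cancellation occurs, and the identity follows as a purely syntactic bookkeeping consequence of (\ref{starMonoid}).
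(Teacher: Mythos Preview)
Your argument is correct and matches the paper's proof essentially verbatim: decompose $B(p,q)$ into $p$, $q$, and the blocks $p_i\star q_j$, then count the latter using (\ref{starMonoid}). The extra remark about $B(p,q)$ being a tuple (so no collapsing of repeated monomials) is a helpful clarification not made explicit in the paper, but the core reasoning is identical.
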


\begin{proof}
By definition, we have
\[
B(p,q) = (p, q, p_1 \star q_1, \ldots, p_1 \star q_b, \ldots, p_a \star q_b),
\]
so
\begin{equation}\label{longEquation}
\ell(B(p,q)) = \ell(p) + \ell(q) + \sum_{i=1}^{a} \sum_{j=1}^{b} \ell(p_i \star q_j).
\end{equation}

Using Equation~(\ref{starMonoid}), each product \( p_i \star q_j \) expands as a sum of \( \min(\deg_y(p_i), \deg_x(q_j)) + 1 \) terms, hence
\[
\ell(p_i \star q_j) = \min(\deg_y(p_i), \deg_x(q_j)) + 1.
\]

Substituting into Equation~(\ref{longEquation}) completes the proof.
\end{proof}

This naturally leads to the following question: what is the largest support $Q^{S}$ such that every element has all its nonzero entries within the first $\ell(B(p,q))$ positions? Here, we emphasize that we are referring to the index $S$ that defines the support set $Q^S$, not the support set itself.

\begin{prop}\label{Mmayor}
Let $p = (p_1,\ldots,p_a)$ and $q = (q_1,\ldots,q_b)$. The largest support reached in the star product $\star $ of two multisymmetric functions is
$$
S = \left\lceil \frac{\ell(B(p,q)) - (a + b)}{ab} \right\rceil - 1,
$$
where $ \lceil\cdot\rceil $ denotes the ceiling function.
\end{prop}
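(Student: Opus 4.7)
The plan is to combine Proposition~\ref{ellBpq} with an explicit count of vector positions for matrices in $Q^{s}$. The first move is to substitute the formula from Proposition~\ref{ellBpq} into $\ell(B(p,q))-(a+b)$, which yields
\[
\ell(B(p,q))-(a+b) \;=\; ab \;+\; \sum_{i,j}\min(\deg_y p_i,\deg_x q_j).
\]
The proposed value of $S$ therefore rewrites as $S=\left\lceil\sum_{i,j}\min(\deg_y p_i,\deg_x q_j)/ab\right\rceil$, i.e.\ the ceiling of the \emph{average} of the numbers $\min(\deg_y p_i,\deg_x q_j)$ across the $ab$ pairs $(i,j)$. This reformulation makes the combinatorial content of the statement transparent and is the quantity the rest of the argument needs to match.

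Next, I would describe the length of the vector encoding of a $\Gamma\in Q^{s}$ with the convention already introduced earlier in the paper. Such a vector has length $a+b+(s+1)ab$: the $a$ yellow entries $\Gamma^{0}_{i0}$, the $b$ top-row entries $\Gamma^{0}_{0j}$, plus the $ab$ entries of the interior block of each $\Gamma^{k}$ for $0\le k\le s$. The crucial observation is that the level-$s$ block, whose non-vanishing is exactly what makes $\Gamma$ belong to $Q^{s}$, occupies the positions $a+b+sab+1,\ldots,a+b+(s+1)ab$ of the vector.

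The third step is to translate the condition that some admissible $\Gamma\in Q^{s}$ places its level-$s$ entries inside the first $\ell(B(p,q))$ positions into the inequality $a+b+sab+1\le \ell(B(p,q))$, which asks that the level-$s$ block overlaps the admissible range of $B(p,q)$. Solving gives $s\le (\ell(B(p,q))-a-b-1)/ab$, and I would invoke the standard identity $\lfloor (N-1)/d\rfloor=\lceil N/d\rceil-1$ for positive integers $N,d$ to rewrite the largest integer solution as
\[
\left\lfloor\frac{\ell(B(p,q))-a-b-1}{ab}\right\rfloor \;=\; \left\lceil\frac{\ell(B(p,q))-a-b}{ab}\right\rceil-1,
\]
which is exactly the formula claimed in the proposition.

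The step I expect to require the most care is the justification that this combinatorial bound is \emph{attained}: one must produce, for $s=S$, an explicit cubical matrix $\Gamma\in Q^{S}$ satisfying the marginal conditions on $\alpha,\beta$ (so that Theorem~\ref{TeoImportante} places it in some $Q(\alpha,\beta,n,m)$) with a nonzero entry in the level-$S$ block, and at the same time verify that for $s=S+1$ the level-$(s)$ block begins strictly beyond position $\ell(B(p,q))$, so no admissible matrix of support $S+1$ fits. Both parts are direct once the counting above is in place, but the first requires a small construction exploiting the flexibility of the marginal conditions while the second is a consequence of the monotonicity of the block endpoints in $s$.
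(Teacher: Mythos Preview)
Your argument is essentially the same block-counting in the vector encoding that the paper uses: strip off the first $a+b$ positions, partition the remaining $\ell(B(p,q))-(a+b)$ positions into interior blocks of size $ab$, and read off the highest level index. You are in fact more careful than the paper, which just asserts the ceiling count directly: your explicit inequality $a+b+sab+1\le\ell(B(p,q))$, the floor/ceiling identity, and the remark that attainment at level $S$ needs a small construction are all genuine refinements of what the paper writes (the paper even slips slightly in naming the blocks as levels $\Gamma^{1},\Gamma^{2},\dots$ rather than starting at $\Gamma^{0}$, though the off-by-one is absorbed in its final ``numbered from zero'' remark).
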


\begin{proof}
We consider an element \(\Gamma\) whose vector representation has length \(\ell(B(p,q))\). The first \(a\) entries correspond to the zero column (\(j = 0\)), and the next \(b\) entries to the zero row (\(i = 0\)) of level zero. Thus, the first \(a + b\) entries are entirely contained in level 0.

The remaining entries, totaling \(\ell(B(p,q)) - (a + b)\), must be distributed into blocks of size \(ab\), corresponding to the matrices of levels \(\Gamma^1, \Gamma^2, \dots\). To fit all these entries, we require
\[
\left\lceil \frac{\ell(B(p,q)) - (a + b)}{ab} \right\rceil
\]
such blocks. Since the levels are numbered starting from zero, the index of the highest level needed is
\[
S = \left\lceil \frac{\ell(B(p,q)) - (a + b)}{ab} \right\rceil - 1,
\]
which completes the proof.
\end{proof}

This significantly simplifies the computation by identifying the largest level $S$ one needs to analyze in order to obtain a term in the product $e_{\alpha}(p) \star e_{\beta}(q)$.
While Theorem~\ref{UDisjunta} describes the decomposition of $Q(\alpha,\beta,n,m)$ into disjoint levels, Proposition~\ref{Mmayor} determines an explicit bound on the level s beyond which no element $\Gamma \in Q^s$ contributes to the star product.
In this sense, Theorem~\ref{SMayor} provides a substantial improvement: it restricts the computation to a finite union of supports, making the explicit calculation of the star product more efficient and conceptually clearer.

\begin{thm}\label{SMayor}
Let $S$ be the largest support defined in Proposition~\ref{Mmayor}. Then
$$
e_{\alpha}(p)\star e_{\beta}(q)=\sum_{m=0}^{\infty} \sum_{\Gamma} e_\Gamma(B(p,q)) \hbar^m, \quad \text{where } \Gamma \text{ ranges over } \bigsqcup_{s=0}^{S} Q^{s}.
$$
\end{thm}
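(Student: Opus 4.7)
The plan is to combine Theorem~\ref{TeoImportante} with Propositions~\ref{UDisjunta} and~\ref{Mmayor} in order to truncate the (a priori infinite) sum to the finite collection $\bigsqcup_{s=0}^{S} Q^{s}$.

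First, I would substitute the decomposition of Proposition~\ref{UDisjunta} into the formula of Theorem~\ref{TeoImportante}, rewriting
\[
e_\alpha(p)\star e_\beta(q) \;=\; \sum_{m=0}^{\infty}\, \sum_{s=0}^{m}\, \sum_{\Gamma \in Q^{s}} e_\Gamma(B(p,q))\, \hbar^{m}.
\]
Once this reordering is in place, the proof reduces to showing that every summand indexed by $s>S$ is zero.

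For this vanishing step, I would exploit the vector representation of $\Gamma$ introduced in Section~1. By Definition~\ref{defiSoporte}, any $\Gamma \in Q^{s}$ has a nonzero entry $\Gamma_{ij}^{s}$ located in the $s$-th level block; by the counting used to prove Proposition~\ref{Mmayor}, that block starts at a vector position strictly greater than $\ell(B(p,q))$ whenever $s>S$. Such a position corresponds, under the pairing with $B(p,q)$, to a coefficient $B_{k}(p_i, q_j)$ for some $k > \min(\deg_y(p_i), \deg_x(q_j))$. Equation~\eqref{starMonoid} then forces the numerical factor $\binom{d_j}{k}(f_i)_{k}$ to be zero, so the corresponding monomial of $B(p,q)$ is identically zero. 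Consequently the product defining $e_\Gamma(B(p,q))$ contains a factor of $0^{\Gamma_{ij}^{s}}$, and the whole term vanishes.

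Having discarded the contributions for $s>S$, the inner sum runs only up to $\min(S,m)$; since any $\Gamma$ with $\Gamma_{ij}^{k}\neq 0$ for $k>m$ would violate $\sum k\,\Gamma_{ij}^{k}=m$, the sets $Q^{s}$ are also empty for $s>m$, and we obtain exactly the index set $\bigsqcup_{s=0}^{S} Q^{s}$ stated in the theorem. The subtle point in the whole argument is the translation between the cubical index $(i,j,s)$ of a nonzero entry and its position in the vector encoding: one must check, when combining Propositions~\ref{UDisjunta} and~\ref{Mmayor}, that the nonzero entry at level $s>S$ really does fall beyond the first $\ell(B(p,q))$ positions of the vector, so that the monomial it is paired with genuinely vanishes. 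The remaining manipulations are straightforward reindexing.
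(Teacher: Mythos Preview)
Your proposal is correct and follows essentially the same route as the paper's own proof. The paper argues, in a single sentence, that if $\Gamma\in Q^{s}$ with $s>S$ then its vector representation has a nonzero entry $\Gamma_{ij}^{s}$ lying beyond the range of $B(p,q)$, hence contributing nothing; you unpack this by first invoking Proposition~\ref{UDisjunta}, then explaining via Equation~\eqref{starMonoid} why the paired monomial is actually zero, and you correctly flag the bookkeeping needed to match the cubical index $(i,j,s)$ with the vector position---a point the paper glosses over.
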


\begin{proof}
According to Proposition~\ref{Mmayor}, the star product of two multisymmetric functions only involves elements $\Gamma \in \bigsqcup_{s=0}^{S} Q^{s} $ . If $ \Gamma \in Q^{s} $ with $ s > S $, then its vector representation contains an entry $ \Gamma_{ij}^{s}$ that lies beyond the range of $B(p,q)$ and therefore contributes nothing to the expression. The sum can thus be restricted to levels $ s \leq S$, as claimed.
\end{proof}

So far, we have characterized the elements $\Gamma \in Q(\alpha,\beta,n,m) $. A natural question is whether there exists a meaningful relation between such elements $ \Gamma $ and the matrices $ \gamma \in L(\alpha,\beta,n) $ that appear in the classical product of multisymmetric functions. One approach to explore this connection is to consider the total matrix obtained by summing all levels $ \Gamma^k $ of a given $ \Gamma $, level by level.

\begin{exa}
	Let \( \Gamma \in Q(\alpha,\beta,n,1) \) with \( \alpha=(1,1) \), \( \beta=(2,1) \), and \( n=4 \). Let
	$$
	\Gamma =
	\left(
	\begin{array}{cc}
	\begin{bmatrix}
		0 & 1 & 0\\
		0 & 1 & 0\\
		0 & 0 & 0
	\end{bmatrix} &
	\begin{bmatrix}
		0 & 0 & 0\\
		0 & 0 & 0\\
		0 & 0 & 1
	\end{bmatrix} \\
	\Gamma^0 & \Gamma^1
	\end{array}
	\right).
	$$
	The matrix obtained by summing its levels is
	$$
	\overline{\Gamma} =
	\begin{bmatrix}
		0 & 1 & 0\\
		0 & 1 & 0\\
		0 & 0 & 1
	\end{bmatrix}.
	$$
\end{exa}
This leads us to define a new object, which we call the smash of a cubical matrix.
\begin{defi}\label{smash}
    Let $\Gamma \in Q(\alpha, \beta, n, m)$. The \textbf{smash} of $\Gamma$, denoted by $\overline{\Gamma}$, is the matrix $\overline{\Gamma} \in \mathrm{Mat}([0,a] \times [0,b], \mathbb{N}^{+})$ defined by
    \[
        \overline{\Gamma}_{ij} = \sum_{k=0}^{\infty} \Gamma_{ij}^k.
    \]

    More generally, given a subset $A \subseteq Q(\alpha,\beta,n,m)$, we define its \textbf{smash set} $\overline{A}$ as
    \[
        \overline{A} = \{ \overline{\Gamma} : \Gamma \in A \}.
    \]
\end{defi}

\begin{prop}\label{DeQaL}
    Let $\alpha  \in \mathbb{N}^a, \beta \in \mathbb{N}^b$, $n \in \mathbb{N}^{+}$ be fixed. Then, for every $m \in  \mathbb{N}^{+}$, the smash set of $Q(\alpha,\beta,n,m)$ is equal to $L(\alpha,\beta,n)$:
    $$
    \overline{Q(\alpha,\beta,n,m)} = L(\alpha,\beta,n).
    $$
\end{prop}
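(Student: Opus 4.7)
My plan is to prove the equality $\overline{Q(\alpha,\beta,n,m)}=L(\alpha,\beta,n)$ by the standard double inclusion argument, with the forward direction being a routine verification and the reverse direction requiring an explicit construction.

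For the inclusion $\overline{Q(\alpha,\beta,n,m)}\subseteq L(\alpha,\beta,n)$, I would start from an arbitrary $\Gamma\in Q(\alpha,\beta,n,m)$, form $\overline{\Gamma}_{ij}=\sum_{k\geq 0}\Gamma_{ij}^{k}$, and verify the defining conditions of $L(\alpha,\beta,n)$ directly from conditions (1)--(3) of Theorem~\ref{TeoImportante}. Each verification is a Fubini-style exchange of summation orders: condition (1) immediately yields $\overline{\Gamma}_{00}=0$; condition (2) yields $\sum_{i,j}\overline{\Gamma}_{ij}=|\Gamma|\leq n$; and condition (3) yields the marginal equalities $\sum_{j}\overline{\Gamma}_{ij}=\alpha_i$ and $\sum_{i}\overline{\Gamma}_{ij}=\beta_j$.

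For the reverse inclusion, given $\gamma\in L(\alpha,\beta,n)$ I need to exhibit some $\Gamma\in Q(\alpha,\beta,n,m)$ with $\overline{\Gamma}=\gamma$. Condition (1) of $Q$ pins down the boundary entries: I must take $\Gamma_{ij}^{0}=\gamma_{ij}$ and $\Gamma_{ij}^{k}=0$ for $k\geq 1$ whenever $i=0$ or $j=0$. The freedom lies in the interior, where I need to distribute each $\gamma_{ij}$ across the levels $k\geq 0$ while achieving the global weighted sum $\sum_{i,j,k}k\Gamma_{ij}^{k}=m$. The most economical recipe is to single out an interior cell $(i_0,j_0)$ with $\gamma_{i_0 j_0}\geq 1$ and let it absorb the entire $m$-weight: set $\Gamma_{i_0 j_0}^{m}=1$, $\Gamma_{i_0 j_0}^{0}=\gamma_{i_0 j_0}-1$, and place all other interior mass at level~$0$. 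The smash then equals $\gamma$, the total mass equals $|\gamma|\leq n$, and the weighted sum equals $m$, so $\Gamma\in Q(\alpha,\beta,n,m)$.

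The main obstacle is guaranteeing the existence of a positive interior cell $(i_0,j_0)$. When $\gamma\in L(\alpha,\beta,n)$ is supported entirely on the boundary row and column (a situation admissible whenever $|\alpha|+|\beta|\leq n$, via $\gamma_{i0}=\alpha_i$, $\gamma_{0j}=\beta_j$, $\gamma_{ij}=0$ for $i,j\geq 1$), the recipe breaks down: any cubical matrix smashing to such a $\gamma$ is forced by condition (1) to live entirely at level zero, yielding weighted sum $0\neq m$. Reconciling this with the stated equality for every $m\geq 1$ is the delicate point of the proof; I would expect it to rest either on an implicit hypothesis in the combinatorial setting of the paper that excludes boundary-supported $\gamma$, or on a more refined distribution argument that extracts positive interior mass from the marginal constraints themselves. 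The bulk of the technical work would be concentrated on this case.
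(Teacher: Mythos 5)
Your double-inclusion strategy is exactly the paper's: the forward direction is the same Fubini-style verification, and for the reverse direction the paper uses precisely your "economical recipe" --- pick a cell $(r,s)$ with $\gamma_{rs}\neq 0$, set $\Gamma_{rs}^{m}=1$, $\Gamma_{rs}^{0}=\gamma_{rs}-1$, and put everything else at level $0$.

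The "delicate point" you flag at the end is not a defect of your write-up; it is a genuine gap, and the paper's own proof has it too. The paper simply chooses $(r,s)$ with $\gamma_{rs}\neq 0$ without requiring $r\geq 1$ and $s\geq 1$; if the chosen cell lies in row $0$ or column $0$, then $\Gamma_{rs}^{m}=1$ with $m\geq 1$ violates condition (1) of Theorem~\ref{TeoImportante}. Your boundary-supported example ($\gamma_{i0}=\alpha_i$, $\gamma_{0j}=\beta_j$, $\gamma_{ij}=0$ for $i,j\geq 1$, admissible whenever $|\alpha|+|\beta|\leq n$) is a genuine counterexample to the stated equality for $m\geq 1$: any $\Gamma$ smashing to it is forced entirely into level $0$, so its weighted sum is $0\neq m$ and it lies in no $Q(\alpha,\beta,n,m)$ with $m\geq 1$. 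There is no hidden hypothesis in the paper excluding such $\gamma$, and no redistribution argument can manufacture interior mass, since $\overline{\Gamma}_{ij}=0$ forces $\Gamma_{ij}^{k}=0$ for all $k$. The correct conclusion is that $\overline{Q(\alpha,\beta,n,m)}$ equals the set of $\gamma\in L(\alpha,\beta,n)$ possessing at least one nonzero entry with both indices positive (for any such $\gamma$ your single-cell lift works for every $m$); the full equality as stated holds only when every $\gamma\in L(\alpha,\beta,n)$ has this property. You identified the precise weakness; the only thing left to do is state the corrected version rather than hope the case is excluded.
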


\begin{proof}
Let $\Gamma \in Q(\alpha,\beta,n,m)$ and define $\gamma = \overline{\Gamma}$ by $\gamma_{ij} = \sum_{k=0}^{\infty} \Gamma_{ij}^k$. We verify that $\gamma \in L(\alpha,\beta,n)$:
\begin{itemize}
    \item $\gamma_{00} = \sum_k \Gamma_{00}^k = 0$ since $\Gamma_{00}^k = 0$ for all $k$.
    \item $|\gamma| = |\Gamma| \leq n$.
    \item For all $i \in [a]$, $\sum_j \gamma_{ij} = \sum_j \sum_k \Gamma_{ij}^k = \alpha_i$.
    \item For all $j \in [b]$, $\sum_i \gamma_{ij} = \sum_i \sum_k \Gamma_{ij}^k = \beta_j$.
\end{itemize}
Thus, $\gamma \in L(\alpha,\beta,n)$ and $\overline{Q(\alpha,\beta,n,m)} \subseteq L(\alpha,\beta,n)$.

For the reverse inclusion, take any $\gamma \in L(\alpha,\beta,n)$. Let $(r,s)$ be such that $\gamma_{rs} \neq 0$ and define $\Gamma = (\Gamma_{ij}^k)$ by:
\begin{itemize}
    \item $\Gamma_{ij}^0 = \gamma_{ij}$ for $(i,j) \neq (r,s)$,
    \item $\Gamma_{rs}^0 = \gamma_{rs} - 1$, \quad $\Gamma_{rs}^m = 1$,
    \item $\Gamma_{ij}^k = 0$ otherwise.
\end{itemize}
Clearly, $\Gamma \in Q(\alpha,\beta,n,m)$ and $\overline{\Gamma} = \gamma$, completing the proof.
\end{proof}

The previous result establishes a precise correspondence between cubical matrices in $Q(\alpha, \beta, n, m)$ and matrices in $L(\alpha, \beta, n)$ via the smash operation. This correspondence allows us to refine the expression for the star product of multisymmetric functions as follows:
\begin{thm}
Let $ e_{\alpha}(p) \) and \( e_{\beta}(q) $ be two multisymmetric functions. Then their star product is given by
$$
e_{\alpha}(p)\star e_{\beta}(q) = \sum_{m=0}^{M} \sum_{\Gamma} e_\Gamma(B(p,q)) \hbar^m,
\quad \text{where } \Gamma \text{ ranges over } \bigsqcup_{s=0}^{S} Q^{s},
$$
and the bound \( M \) is given by
$$
M = \max\left\{ \sum_{i=1}^{a} \sum_{j=1}^{b} S \gamma_{ij} : \gamma \in L(\alpha,\beta,n),\ (i,j)\in[a] \times [b] \right\},
$$
where $S$ is as defined in Proposition~\ref{Mmayor}.
\end{thm}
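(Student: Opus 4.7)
The plan is to start from Theorem~\ref{SMayor}, which already restricts the support index of any contributing cubical matrix to the range $s\leq S$, and then leverage the smash construction of Proposition~\ref{DeQaL} to also bound the degree index $m$. Concretely, given a contributing $\Gamma\in\bigsqcup_{s=0}^{S}Q^{s}$, the condition $\Gamma_{ij}^{k}=0$ for $k>S$ lets me rewrite
\[
m \;=\; \sum_{i=0}^{a}\sum_{j=0}^{b}\sum_{k=0}^{\infty} k\,\Gamma_{ij}^{k}
\;=\; \sum_{i=0}^{a}\sum_{j=0}^{b}\sum_{k=1}^{S} k\,\Gamma_{ij}^{k}.
\]
Using condition (1) in Theorem~\ref{TeoImportante}, the terms with $i=0$ or $j=0$ vanish for $k\geq 1$, so the sum reduces to indices $i\in[a]$, $j\in[b]$.

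Next I would apply the trivial estimate $k\leq S$ inside the sum, obtaining
\[
m \;\leq\; S\sum_{i=1}^{a}\sum_{j=1}^{b}\sum_{k=1}^{S}\Gamma_{ij}^{k}
\;\leq\; S\sum_{i=1}^{a}\sum_{j=1}^{b}\sum_{k=0}^{\infty}\Gamma_{ij}^{k}
\;=\; S\sum_{i=1}^{a}\sum_{j=1}^{b}\overline{\Gamma}_{ij}.
\]
At this point I invoke Proposition~\ref{DeQaL}: the smash $\gamma:=\overline{\Gamma}$ belongs to $L(\alpha,\beta,n)$. Hence the right-hand side is bounded by the maximum $\sum_{i=1}^{a}\sum_{j=1}^{b}S\gamma_{ij}$ taken over all $\gamma\in L(\alpha,\beta,n)$, which is precisely $M$. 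Therefore every $\Gamma$ that actually contributes a nonzero term to $e_\alpha(p)\star e_\beta(q)$ satisfies $m\leq M$, and the outer infinite sum in Theorem~\ref{TeoImportante} can be truncated at $m=M$.

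The only subtle point is to justify why all matrices with $m>M$ may be dropped without altering the expansion: this is simply because, for such $m$, the set $Q(\alpha,\beta,n,m)$ is either empty or its elements have supports outside $\bigsqcup_{s=0}^{S}Q^{s}$ and hence contribute zero by Theorem~\ref{SMayor}. Combining this truncation in $m$ with the support truncation already established in $s$ yields the claimed finite formula. I do not anticipate a genuine obstacle here; the argument is essentially a double application of the previously proved results, with the smash map playing the bridging role between the quantum indices $(k,\Gamma_{ij}^{k})$ and the classical matrix entries $\gamma_{ij}$.
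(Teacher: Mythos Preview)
Your argument is correct and follows essentially the same route as the paper's proof: invoke Theorem~\ref{SMayor} to restrict to $\Gamma\in\bigsqcup_{s=0}^{S}Q^{s}$, use $k\leq S$ in the weight sum $m=\sum k\Gamma_{ij}^{k}$, identify $\sum_{k}\Gamma_{ij}^{k}$ with the smash entry $\gamma_{ij}\in L(\alpha,\beta,n)$, and conclude $m\leq M$. Your version is in fact slightly more careful in explicitly justifying (via condition~(1) of Theorem~\ref{TeoImportante}) why the sum reduces to indices $(i,j)\in[a]\times[b]$, a step the paper's proof leaves implicit.
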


\begin{proof}
    Let $\Gamma \in Q(\alpha,\beta,n,m)$ be a solution that contributes a term to the star product. By Theorem~\ref{SMayor}, it follows that $\Gamma \in \bigsqcup_{s=0}^{S} Q^{s}$, and therefore,
    $$
    m=\sum_{i=0}^{a}\sum_{j=0}^{b}\sum_{k=0}^{\infty} k\Gamma_{ij}^{k}\leq
    \sum_{i=0}^{a}\sum_{j=0}^{b}\sum_{k=0}^{\infty} S\Gamma_{ij}^{k}=
    \sum_{i=0}^{a}\sum_{j=0}^{b} S\sum_{k=0}^{\infty}\Gamma_{ij}^{k}=
    \sum_{i=0}^{a}\sum_{j=0}^{b} S\gamma_{ij}.
    $$

    Taking the maximum over all $\gamma \in L(\alpha,\beta,n)$ yields
    $$
    m \leq M = \max\left\{\sum_{i=1}^{a}\sum_{j=1}^{b} S\gamma_{i,j} : \gamma \in L(\alpha,\beta,n) \text{ and } (i,j) \in [a] \times [b] \right\}.
    $$

    Consequently, the star product satisfies
    $$
    e_{\alpha}(p)\star e_{\beta}(q)=\sum_{m=0}^{M} \sum_{\Gamma} e_\Gamma(B(p,q)) \hbar^m \quad \text{where } \Gamma \in \bigsqcup_{s=0}^{S} Q^{s}.
    $$
\end{proof}

\section{Cubical liftings of matrices}

In the previous section, we established a precise connection between cubical matrices in $Q(\alpha, \beta, n, m)$ and matrices in $L(\alpha, \beta, n)$ via the smash operation. We now reverse this perspective by exploring how elements of $L(\alpha, \beta, n)$ can be lifted back into the cubical setting. This leads us to introduce the concept of \emph{lifting} a matrix to a cubical one at a given level. These liftings provide a constructive tool for generating and analyzing cubical matrices with prescribed smashes, which is crucial for understanding the structure and computation of the star product.

\begin{defi}
    Let $s \in \mathbb{N}$ and $m \in \mathbb{N}$. Given $\gamma \in L(\alpha, \beta, n)$, we define the \emph{lifting} of $\gamma$ at level $s$, denoted by $\uparrow_{m}^{s}(\gamma)$, as a cubical matrix $\Gamma \in Q^{s} \subseteq Q(\alpha, \beta, n, m)$ such that $\overline{\Gamma} = \gamma$.

  Furthermore, for any subset $B \subseteq L(\alpha, \beta, n)$, we define its lifting at level $s$ as
    $$
    \uparrow_{m}^{s}(B) = \left\{ \uparrow_{m}^{s}(\gamma) : \gamma \in B \right\}.
    $$
\end{defi}

\begin{exa}
    Consider $\alpha = (2, 1)$, $\beta = (1, 2)$, $n = 3$ and $m = 1$. Let

    $$\gamma = \begin{bmatrix}
        0 & 0 & 0\\
        0 & 0 & 2\\
        0 & 1 & 0
    \end{bmatrix},$$
    then the lifting of $\gamma$ to level 1, denoted by $\uparrow_{1}^{1}(\gamma)$, is given by:

    $$
    \uparrow_{1}^{1}(\gamma) = \left\{
    \left[
    \begin{bmatrix}
        0 & 0 & 0\\
        0 & 0 & 1\\
        0 & 1 & 0
    \end{bmatrix},
    \begin{bmatrix}
        0 & 0 & 0\\
        0 & 0 & 1\\
        0 & 0 & 0
    \end{bmatrix}
    \right],
    \left[
    \begin{bmatrix}
        0 & 0 & 0\\
        0 & 0 & 2\\
        0 & 0 & 0
    \end{bmatrix},
    \begin{bmatrix}
        0 & 0 & 0\\
        0 & 0 & 0\\
        0 & 1 & 0
    \end{bmatrix}
    \right]
    \right\}.
    $$
\end{exa}

Since $\overline{Q(\alpha,\beta,n,m)} = L(\alpha,\beta,n)$, lifting the entire set $L(\alpha,\beta,n)$ recovers the set $Q(\alpha,\beta,n,m)$. In other words,
$$
\displaystyle\bigsqcup_{s=0}^{m} \uparrow_{m}^{s}(L(\alpha,\beta,n)) = Q(\alpha,\beta,n,m),
$$
which enables us to express the star product of two elementary multisymmetric functions in terms of liftings of classical matrices $\gamma$.

For notational convenience, we write
$$
\uparrow_{m}(L(\alpha,\beta,n)) = \displaystyle\bigsqcup_{s=0}^{m} \uparrow_{m}^{s}(L(\alpha,\beta,n)).
$$

This reformulation is summarized in the following proposition.

\begin{thm}\label{PropestrellaFromL}
    The star product $\star$ between $e_\alpha(p)$ and $e_\beta(q)$, as given in Theorem~\ref{TeoImportante}, can be rewritten as
    $$
    e_\alpha(p)\star e_\beta(q)=\sum_{m=0}^{M}\left(\sum_{\Gamma\in \uparrow_{m}(L(\alpha,\beta,n))} e_\Gamma(B(p,q))\right)\hbar^m.
    $$
\end{thm}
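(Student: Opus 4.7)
The plan is to show that the indexing set in the current theorem, $\uparrow_{m}(L(\alpha,\beta,n))$, coincides with $Q(\alpha,\beta,n,m)$ for every $m$; once this set-theoretic identity is established, the formula follows by a direct substitution into the result of the previous theorem (which already truncated the outer sum at $M$). So the entire proof reduces to verifying the identity
\[
\bigsqcup_{s=0}^{m} \uparrow_{m}^{s}(L(\alpha,\beta,n)) \;=\; Q(\alpha,\beta,n,m).
\]

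The first step is to recall the decomposition from Proposition~\ref{UDisjunta}, which gives $Q(\alpha,\beta,n,m) = \bigsqcup_{s=0}^{m} Q^{s}$. Thus it suffices to verify, level by level, that $\uparrow_{m}^{s}(L(\alpha,\beta,n)) = Q^{s}$ for each $s \in \{0,1,\dots,m\}$. The inclusion $\uparrow_{m}^{s}(L(\alpha,\beta,n)) \subseteq Q^{s}$ is immediate from the definition of lifting, since each $\uparrow_{m}^{s}(\gamma)$ consists, by construction, of cubical matrices lying in $Q^{s} \subseteq Q(\alpha,\beta,n,m)$. For the reverse inclusion, pick any $\Gamma \in Q^{s}$; by Proposition~\ref{DeQaL} its smash satisfies $\overline{\Gamma} \in L(\alpha,\beta,n)$, so setting $\gamma := \overline{\Gamma}$ places $\Gamma$ inside $\uparrow_{m}^{s}(\gamma)$ and hence inside $\uparrow_{m}^{s}(L(\alpha,\beta,n))$. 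Taking the disjoint union over $s$ yields the desired identity.

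Once the set equality is in place, the proof concludes by invoking the preceding theorem, which rewrites
\[
e_{\alpha}(p)\star e_{\beta}(q) = \sum_{m=0}^{M}\left(\sum_{\Gamma \in Q(\alpha,\beta,n,m)} e_{\Gamma}(B(p,q))\right)\hbar^{m},
\]
and replacing $Q(\alpha,\beta,n,m)$ by $\uparrow_{m}(L(\alpha,\beta,n))$ under the summation.

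I do not expect any serious obstacle: the argument is essentially a bookkeeping reinterpretation of the two preceding results. The only point that warrants care is the surjectivity content in the ``$\supseteq$'' direction, where one must notice that although a fixed $\gamma \in L(\alpha,\beta,n)$ may have an empty lifting at some level $s$ (if no $\Gamma \in Q^{s}$ smashes to it), this does not matter for the union over all $\gamma$, since every $\Gamma \in Q^{s}$ does smash to \emph{some} element of $L(\alpha,\beta,n)$ by Proposition~\ref{DeQaL}. One should also remark, to keep the disjoint-union notation honest, that the sets $\uparrow_{m}^{s}(L(\alpha,\beta,n))$ are automatically pairwise disjoint as $s$ varies, because they are contained in the pairwise disjoint sets $Q^{s}$.
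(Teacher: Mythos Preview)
Your proposal is correct and follows essentially the same approach as the paper: the paper asserts the identity $\bigsqcup_{s=0}^{m}\uparrow_{m}^{s}(L(\alpha,\beta,n))=Q(\alpha,\beta,n,m)$ in the paragraph immediately preceding the theorem (as a direct consequence of Proposition~\ref{DeQaL}) and leaves the substitution implicit, whereas you spell out the level-by-level verification via Propositions~\ref{UDisjunta} and~\ref{DeQaL}. Your argument is a faithful, slightly more detailed rendering of the paper's reasoning.
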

\begin{rem}
    This formulation offers a computational advantage: rather than enumerating over the full set of cubical matrices $ Q(\alpha, \beta, n, m) $, it is enough to work with the classically defined matrices $ \gamma \in L(\alpha, \beta, n)$ and their corresponding liftings. This not only simplifies the structure of the objects involved, but also provides a more tractable framework for explicit calculations and algorithmic implementations of the star product.
\end{rem}

A particularly interesting aspect of the star product \( \star \) between multisymmetric functions is the role played by the \( \hbar \) coefficients. In fact, the coefficient of \( \hbar \) is directly related to the Poisson bracket between multisymmetric functions, specifically equal to half its value. For a more detailed explanation of this connection, we refer the reader to \cite{Kon}. Since an elementary multisymmetric function \( e_\Gamma \) appears as the coefficient of \( \hbar \) if and only if \( \Gamma \in Q(\alpha, \beta, n, 1) \), we turn our attention to its cardinality in order to determine how many such functions contribute at first order in the deformation.

\begin{prop} 
Let \( \alpha \in \mathbb{N}^a \), \( \beta \in \mathbb{N}^b \), and fix \( n \in \mathbb{N} \). Then the number of cubical matrices in \( Q(\alpha, \beta, n, 1) \) is given by
\[
|Q(\alpha, \beta, n, 1)| = \sum_{\gamma \in L(\alpha, \beta, n)} f_\gamma,
\]
where for each matrix \( \gamma \), the value \( f_\gamma \) counts the number of nonzero entries not located in the first row or first column, that is,
\[
f_\gamma = \left| \{ (i, j) \in [a] \times [b] : \gamma_{ij} \neq 0 \} \right|.
\]
\end{prop}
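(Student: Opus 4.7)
The plan is to set up a bijection between $Q(\alpha,\beta,n,1)$ and the disjoint union, over $\gamma \in L(\alpha,\beta,n)$, of the sets of ``admissible positions'' $\{(r,s) \in [a]\times[b] : \gamma_{rs} \neq 0\}$, which has cardinality $\sum_\gamma f_\gamma$.

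First I would analyze what the condition $m=1$ forces on a cubical matrix $\Gamma \in Q(\alpha,\beta,n,1)$. From condition 2 in Theorem~\ref{TeoImportante} applied with $m=1$, the equation $\sum_{i,j,k} k\,\Gamma_{ij}^k = 1$ together with $\Gamma_{ij}^k \in \mathbb{N}$ forces the existence of a unique triple $(r,s,t)$ with $\Gamma_{rs}^t \neq 0$, $t\geq 1$; moreover this entry must equal $1$ and $t=1$. All other entries at levels $k\geq 1$ vanish. Combining with condition 1 of the same theorem, which forbids nonzero entries at level $\geq 1$ when $i=0$ or $j=0$, we get $(r,s) \in [a]\times[b]$.

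Next, I would invoke Proposition~\ref{DeQaL}: the smash $\gamma = \overline{\Gamma}$ lies in $L(\alpha,\beta,n)$. Conversely, given any $\gamma \in L(\alpha,\beta,n)$ and any position $(r,s) \in [a]\times[b]$ with $\gamma_{rs} \neq 0$, I would define an explicit lift
\[
\Gamma^0_{ij} = \gamma_{ij} \ \text{for } (i,j) \neq (r,s), \qquad \Gamma^0_{rs} = \gamma_{rs}-1, \qquad \Gamma^1_{rs} = 1,
\]
with all other entries zero. A direct verification (essentially the same check as in Proposition~\ref{DeQaL}) shows $\Gamma \in Q(\alpha,\beta,n,1)$ and $\overline{\Gamma} = \gamma$.

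Finally, I would observe that these two constructions are mutually inverse: the map $\Gamma \mapsto (\overline{\Gamma}, (r,s))$, where $(r,s)$ is the unique position carrying the level-$1$ entry, is a bijection onto
\[
\bigl\{(\gamma,(r,s)) : \gamma \in L(\alpha,\beta,n),\ (r,s) \in [a]\times[b],\ \gamma_{rs} \neq 0\bigr\}.
\]
Taking cardinalities yields $|Q(\alpha,\beta,n,1)| = \sum_{\gamma \in L(\alpha,\beta,n)} f_\gamma$. I do not anticipate a substantive obstacle; the only place requiring care is ensuring that $\Gamma^0_{rs} = \gamma_{rs}-1 \geq 0$, which is precisely guaranteed by the condition $\gamma_{rs} \neq 0$ that defines $f_\gamma$, and that the marginal and level-$0$ zero-boundary conditions transfer correctly between $\gamma$ and $\Gamma$.
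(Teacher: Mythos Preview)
Your proposal is correct and follows essentially the same approach as the paper: the paper phrases the argument via the lifting decomposition $Q(\alpha,\beta,n,1)=\bigsqcup_{\gamma\in L(\alpha,\beta,n)}\uparrow_{1}^{1}(\gamma)$ and then asserts that $|\uparrow_{1}^{1}(\gamma)|=f_\gamma$, while you unpack that same bijection explicitly by tracking the unique level-$1$ entry. Your version is in fact a bit more detailed, since you spell out why $m=1$ forces exactly one entry equal to $1$ at level $1$, and why the position must lie in $[a]\times[b]$, whereas the paper leaves this implicit.
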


\begin{proof}
The proof follows directly from the identity
\[
Q(\alpha, \beta, n, 1)=\uparrow_{1}^{1}(L(\alpha,\beta,n))=\bigsqcup_{\gamma\in L(\alpha,\beta,n)}\uparrow_{1}^{1}(\gamma),
\]
which expresses that each matrix in $Q(\alpha, \beta, n, 1)$ arises as a level-$1$ lifting of some matrix $\gamma \in L(\alpha, \beta, n)$. Since the set $\uparrow_{1}^{1}(\gamma)$ contains exactly $f_\gamma$ elements—one for each nonzero entry in $\gamma$ (excluding row and column zero)—we conclude that
\[
|Q(\alpha, \beta, n, 1)| = \sum_{\gamma\in L(\alpha,\beta,n)} f_\gamma.
\]
\end{proof}
\section{The RSK algorithm in the quantum context}

The origins of the RSK algorithm trace back to the 1938 work of Gilbert de Beauregard Robinson \cite{Robinson}, who established a correspondence between permutations and pairs of standard Young tableaux. In 1961, Craige Schensted \cite{Schen} rediscovered and reformulated this correspondence as a bijection between permutations and pairs of standard tableaux, now known as the \emph{Schensted insertion algorithm}. Later, in 1970, Donald Knuth \cite{Knuth} extended this construction to handle matrices with nonnegative integer entries, giving rise to the now classical \emph{Robinson--Schensted--Knuth (RSK) algorithm}. This algorithm has since become a central tool in algebraic combinatorics, with deep connections to representation theory, symmetric functions, and algebraic geometry \cite{Sagan,Stanley}.

In this section, we adapt an idea originally introduced in the classical setting of \( L(\alpha, \beta, n) \), as discussed in \cite{PSierra}. There, a variant of the RSK algorithm was used to establish a bijection capable of generating each matrix \(\gamma\). In certain cases, this bijective approach proved simpler and more efficient than alternative methods.

Here, we extend this idea to the quantum setting, specifically to the structure of the star product. This extension provides a solid theoretical foundation for computational implementation \cite{sierra2025}, and underpins the Python code used to produce many of the examples in this work.

To this end, we develop a combinatorial framework that encodes cubical matrices \(\Gamma \in Q(\alpha, \beta, n, m)\) using a new family of objects, which we call \emph{3-words}. These objects are designed to mirror the structure of \(\Gamma\), and they allow us to construct a bijection between the elements of \(\uparrow_m(L_N)\) and a restricted class of 3-words. The results that follow demonstrate how a variant of the RSK algorithm can be employed to generate these elements efficiently, thereby making the computation of the star product more tractable.

\begin{defi}
Let $N \in \mathbb{N}$ be fixed. A \emph{3-word} $\Omega$ is a $3 \times N$ matrix with non-negative integer entries,
\[
\Omega = 
\begin{pmatrix}
s_1 & s_2 & \cdots & s_N \\
i_1 & i_2 & \cdots & i_N \\
j_1 & j_2 & \cdots & j_N
\end{pmatrix}
\in M_{3 \times N}(\mathbb{N}),
\]
satisfying the following conditions:
\begin{enumerate}
        \item $0 \leq s_1 \leq s_2 \leq \cdots \leq s_N$,
        \item For all $t \in [N]$, we have $i_t > 0$ and $j_t > 0$,
        \item If $s_t = s_{t+1}$, then $i_t \leq i_{t+1}$,
        \item If $i_t = i_{t+1}$, then $j_t \leq j_{t+1}$.
    \end{enumerate}
\end{defi}

\begin{defi}
    Let $\Omega$ be a 3-word and let $c \in \{1, 2, 3\}$. The \emph{type along row $c$}, denoted by $type^{c}(\Omega)$, is the vector
    $$
    type^{c}(\Omega) = (u_1, \dots, u_k),
    $$
    where each $u_i \in \mathbb{N}$ represents the number of times the integer $i$ appears in the $c$-th row of $\Omega$. The index $k$ corresponds to the largest value appearing in that row.
\end{defi}

\begin{exa}
Consider the 3-word
\[
\Omega =
\begin{pmatrix}
0 & 0 & 1 \\
1 & 2 & 3 \\
2 & 2 & 3
\end{pmatrix}.
\]
Then \( type^{1}(\Omega) = (1) \), \( type^{2}(\Omega) = (1,1,1) \), and \( type^{3}(\Omega) = (0,2,1) \).
\end{exa}

\begin{defi}
Let \( \alpha \in \mathbb{N}^a \), \( \beta \in \mathbb{N}^b \), \( n, m \in \mathbb{N} \). We define \( A(\alpha, \beta, n, m) \) to be the set of \( 3 \)-words
\[
\Omega =
\begin{pmatrix}
s_1 & s_2 & \cdots & s_N \\
i_1 & i_2 & \cdots & i_N \\
j_1 & j_2 & \cdots & j_N
\end{pmatrix}
\in M_{3 \times N}(\mathbb{N})
\]
satisfying the following conditions:
\begin{enumerate}
    \item For all \( t \in [N] \), if \( i_t = j_t \), then \( i_t \neq 1 \),
    \item If \( s_t > 0 \), then \( i_t > 1 \) and \( j_t > 1 \),
    \item \( \sum_{t = 1}^{N} s_t = m \),
    \item \( type^2(\Omega)  = (N - |\alpha|, \alpha_1, \dots, \alpha_a) \) and
          \( type^3(\Omega)  = (N - |\beta|, \beta_1, \dots, \beta_b) \).
\end{enumerate}
\end{defi}

\begin{thm}\label{thm:3words-bijection}
There exists a bijection between the set \( \uparrow_m(L_N) \) and the set \( A(\alpha, \beta, n, m) \).
\end{thm}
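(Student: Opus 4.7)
The plan is to exhibit an explicit bijection $\Phi$ together with its inverse $\Psi$, realizing each cubical matrix as the column-multiset of a 3-word. Given $\Gamma \in \uparrow_m(L_N)$, the map sends $\Gamma$ to the 3-word $\Phi(\Gamma) \in M_{3\times N}(\mathbb{N})$ obtained by forming, for every triple $(i,j,k)$ with $\Gamma_{ij}^k > 0$, exactly $\Gamma_{ij}^k$ copies of the column $\bigl(k,\, i+1,\, j+1\bigr)$, and then arranging all resulting columns in lexicographic order, first by $s$, then by $i$, then by $j$. In the reverse direction, $\Psi$ sends a 3-word $\Omega$ to the cubical matrix with entries
\[
\Gamma_{ij}^k = \#\bigl\{t \in [N] : s_t = k,\ i_t = i+1,\ j_t = j+1\bigr\}.
\]
The shift by $+1$ on the $i$ and $j$ coordinates reflects that the zero row/column of $\Gamma$ is indexed by $0$, while 3-words use labels $i_t, j_t \geq 1$ on rows $2$ and $3$.

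Next I would verify that $\Phi(\Gamma)$ really lands in $A(\alpha,\beta,n,m)$. The non-decreasing conditions of the 3-word are automatic from the lexicographic sort. The axiom $i_t = j_t \Rightarrow i_t \neq 1$ encodes the vanishing $\Gamma_{00}^k = 0$ for every $k$; the axiom $s_t > 0 \Rightarrow i_t, j_t > 1$ encodes the vanishing of $\Gamma_{ij}^k$ on the zero row or column whenever $k \geq 1$. The sum condition $\sum_t s_t = m$ reads off as $\sum_{i,j,k} k\,\Gamma_{ij}^k = m$. For the type conditions, the number of columns of $\Omega$ with $i_t = r+1$ is precisely the row sum $\sum_{j,k}\Gamma_{rj}^k$, which equals $\alpha_r$ for $r\in[a]$ and equals $N - |\alpha|$ for $r = 0$; the analogous count on row three recovers the $\beta$ marginals. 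Symmetrically, that $\Psi(\Omega)$ lies in $\uparrow_m(L_N)$ follows by reading the same dictionary in reverse and checking each of the defining conditions of $Q(\alpha,\beta,n,m)$ listed in Theorem~\ref{TeoImportante}.

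Finally, I would argue that $\Phi$ and $\Psi$ are mutually inverse. The composition $\Psi \circ \Phi$ is the identity because counting the columns of $\Phi(\Gamma)$ with prescribed label $(k,\, i+1,\, j+1)$ recovers the original multiplicity $\Gamma_{ij}^k$. The composition $\Phi \circ \Psi$ is the identity because the 3-word produced from $\Psi(\Omega)$ has the same multiset of columns as $\Omega$, and since $\Omega$ was already ordered lexicographically by the 3-word axioms, the sorting step in $\Phi$ returns it unchanged. The main obstacle is purely bookkeeping: keeping the index shift consistent and translating each of the four defining conditions of $A(\alpha,\beta,n,m)$ into the corresponding constraint of $Q(\alpha,\beta,n,m)$ (including the marginal count on the zero row/column, which determines the leading entries $N - |\alpha|$ and $N - |\beta|$ of the type vectors). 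Once this dictionary is laid out, every implication is a short, direct verification.
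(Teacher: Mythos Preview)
Your proposal is correct and follows essentially the same approach as the paper: both construct the bijection by sending $\Gamma$ to the 3-word whose columns are the triples $(k,\,i+1,\,j+1)$ repeated with multiplicity $\Gamma_{ij}^k$, arranged in lexicographic order, and both verify the defining conditions of $A(\alpha,\beta,n,m)$ by translating them into the constraints on $Q(\alpha,\beta,n,m)$. The only cosmetic difference is that the paper checks injectivity and surjectivity separately, whereas you write down the inverse $\Psi$ explicitly and verify the two compositions are identities; your treatment of the ordering and of the zero row/column marginals is in fact slightly more explicit than the paper's.
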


\begin{proof}
    We define the map
    $$
    \begin{array}{ccl}
        \psi: \uparrow_{m}(L_N) & \longrightarrow & A(\alpha, \beta, n, m)  \\
        \Gamma & \longmapsto & \psi(\Gamma) = (\varphi(\Gamma_{11}^1), \cdots, \varphi(\Gamma_{ab}^m)),
    \end{array}
    $$
    where each $\varphi(\Gamma_{ij}^k)$ is given by:

     $\varphi(\Gamma_{ij}^k):=\varphi(i,j,k,\Gamma_{ij}^{k})=\begin{array}{c}
		\underbrace{\left(\begin{matrix}
				k&k&\cdots&k\\
				i+1&i+1&\cdots&i+1\\
				j+1&j+1&\cdots&j+1\end{matrix}\right)}\\ \Gamma_{i,j}^k\  times\end{array}$.\\

    The resulting 3-word $\psi(\Gamma)$ is obtained by concatenating all such blocks.
    Since $\varphi$ is injective, it is clear that if $\Gamma, \Gamma' \in \uparrow_{m}(L_N)$ and $\psi(\Gamma) = \psi(\Gamma')$, then
  $$
\varphi(\Gamma_{ij}^k) = \varphi({\Gamma^{\prime}}_{ij}^k) \Rightarrow \Gamma_{ij}^k = {\Gamma^{\prime}}_{ij}^k,
$$
    for all $i,j,k$, and thus $\Gamma = \Gamma'$. Therefore, $\psi$ is injective.

    For surjectivity, let
    $$
    \Omega = \begin{pmatrix}
        s_1 & s_2 & \cdots & s_N \\
        i_1 & i_2 & \cdots & i_N \\
        j_1 & j_2 & \cdots & j_N
    \end{pmatrix} \in A(\alpha, \beta, n, m) .
    $$

    We now verify that $\Gamma \in \uparrow_{m}(L_N)$:
    \begin{enumerate}
        \item Since $\Omega \in A$ satisfies that if $i_t = j_t$ then $i_t \neq 1$, this ensures $\Gamma_{00}^k = 0$ for all $k$.
        \item If $s_t > 0$, then $i_t > 1$ and $j_t > 1$, so any pair $(i,j)$ with $i = 0$ or $j = 0$ must satisfy $\Gamma_{ij}^k = 0$ for $k \geq 1$.
        \item The total number of columns in $\Omega$ is $N \leq n$, so $|\Gamma| \leq n$, and        $$
        \sum_{k=0}^{\infty} \sum_{i=0}^a \sum_{j=0}^b k \Gamma_{ij}^k = \sum_{t=1}^{N} s_t = m.
        $$
        \item From the definition of type$^2$ and type$^3$, we conclude that:
        $$
        \sum_{j=0}^{b} \sum_{k=0}^{\infty} \Gamma_{ij}^k = \alpha_i, \quad
        \sum_{i=0}^{a} \sum_{k=0}^{\infty} \Gamma_{ij}^k = \beta_j.
        $$
    \end{enumerate}
 Therefore, $\Gamma \in \uparrow_{m}(L_N)$ and $\psi(\Gamma) = \Omega$, which completes the proof.
\end{proof}

\begin{exa}\label{3Palabra}
Let $\alpha = (2,1)$ and $\beta = (1,2)$. Consider $\Gamma \in \uparrow_{2}^{1}(L_3)$ given by
\[
\Gamma =
\left(
\begin{array}{cc}
\begin{bmatrix}
0 & 0 & 0 \\
0 & 0 & 0 \\
0 & 0 & 1
\end{bmatrix} &
\begin{bmatrix}
0 & 0 & 0 \\
0 & 1 & 1 \\
0 & 0 & 0
\end{bmatrix} \\
\Gamma^0 & \Gamma^1
\end{array}
\right).
\]
The associated $3$-word $\Omega$ is then
\[
\Omega =
\begin{pmatrix}
0 & 1 & 1 \\
3 & 2 & 2 \\
3 & 2 & 3
\end{pmatrix}.
\]
\end{exa}

These results highlight the usefulness of 3-words as a combinatorial tool for encoding and recovering the structure of elements \( \Gamma \in Q(\alpha, \beta, n, m) \). Given a 3-word associated to a cubical matrix \( \Gamma \), several important properties can be directly extracted: the total weight \( m \) corresponds to the sum \( \sum_{t=1}^{N} s_t \); the types of the second and third rows of the 3-word recover the data \( \alpha \) and \( \beta \) via \( \text{type}^2(\Omega) = (N - |\alpha|, \alpha_1, \dots, \alpha_a) \) and \( \text{type}^3(\Omega) = (N - |\beta|, \beta_1, \dots, \beta_b) \); the support level \( s \) of \( \Gamma \) is given by the maximum value \( s_N \); and the size \( |\Gamma| \) equals \( N \). Altogether, these features demonstrate the power of the bijection in simplifying both the description and manipulation of the combinatorial data involved in the star product. The following theorem summarizes these observations.

\begin{thm}
Let $\Gamma \in Q(\alpha, \beta, n, m)$ and consider its associated $3$-word $\Omega_{\Gamma}$ given by
\[
\Omega_{\Gamma} = \begin{pmatrix}
s_1 & s_2 & s_3 & \cdots & s_{N} \\
i_1 & i_2 & i_3 & \cdots & i_{N} \\
j_1 & j_2 & j_3 & \cdots & j_{N}
\end{pmatrix}.
\]
Then the following properties hold:
\begin{enumerate}
    \item $|\Gamma| = N$.
    \item $\Gamma \in Q^{s}$ where $s = s_{N}$.
    \item $m = \displaystyle\sum_{t=1}^{N} s_{t}$.
    \item The vector $\alpha$ is obtained by removing the first entry of $type^{2}(\Omega)$. Similarly, $\beta$ is obtained by removing the first entry of $type^{3}(\Omega)$.
\end{enumerate}
\end{thm}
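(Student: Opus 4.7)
The plan is to derive all four properties as immediate consequences of the explicit description of the bijection $\psi$ from Theorem~\ref{thm:3words-bijection}. Recall that $\Omega_\Gamma = \psi(\Gamma)$ is assembled by concatenating, over all triples $(i,j,k)$, the blocks $\varphi(\Gamma_{ij}^k)$ consisting of $\Gamma_{ij}^k$ copies of the column $(k,\, i+1,\, j+1)^{T}$. Each of the four claims will then follow by reading off a specific feature of this concatenation.

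First I would handle properties $(1)$ and $(3)$ by direct enumeration. The total number of columns in $\Omega_\Gamma$ is $\sum_{i,j,k} \Gamma_{ij}^k = |\Gamma|$, which gives $N = |\Gamma|$. Summing the top row yields $\sum_{t=1}^N s_t = \sum_{i,j,k} k\,\Gamma_{ij}^k$, and by the marginal condition defining $Q(\alpha,\beta,n,m)$ this equals $m$. For property $(2)$, the weak-increasing convention on the first row of a $3$-word forces $s_N$ to be the maximum value of $k$ that occurs among the columns, and by construction of $\psi$ that maximum is precisely $\max\{k : \Gamma^{k} \neq 0\}$, i.e., the support level $s$ of $\Gamma$ in the sense of Definition~\ref{defiSoporte}.

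Property $(4)$ reduces to a multiplicity count in the second and third rows. The value $r$ appears in the second row once for each column whose $(i+1)$-coordinate equals $r$, so its multiplicity is $\sum_{j,k} \Gamma_{r-1,j}^{k}$. For $r = i+1$ with $i \in [a]$, the marginal condition gives exactly $\alpha_i$. For $r = 1$, the multiplicity is $\sum_{j,k} \Gamma_{0j}^{k}$, which equals $N - |\alpha|$ after splitting $N = |\Gamma|$ into the contributions from $i = 0$ and from $i \in [a]$ (the latter summing to $|\alpha|$). This identifies $\mathrm{type}^{2}(\Omega) = (N - |\alpha|, \alpha_1, \ldots, \alpha_a)$, and the analogous argument for $\mathrm{type}^{3}$ using the third row and the marginals $\sum_{i,k}\Gamma_{ij}^{k} = \beta_j$ yields $(N - |\beta|, \beta_1, \ldots, \beta_b)$, so removing the first entry of each type vector recovers $\alpha$ and $\beta$ respectively. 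The main obstacle is essentially negligible: the result is a bookkeeping corollary of Theorem~\ref{thm:3words-bijection}, and the only point deserving care is ensuring that the ordering convention on $3$-words really makes $s_N$ the maximal level appearing in $\Gamma$, which is exactly what the weak-monotonicity axiom guarantees.
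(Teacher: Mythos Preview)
Your proposal is correct and follows essentially the same approach as the paper: both arguments read off each property directly from the column-block description of the bijection $\psi$, with (1) and (3) coming from counting columns and summing the top row, (2) from the monotonicity of the $s_t$, and (4) from matching row multiplicities to the marginal constraints. Your write-up is arguably a bit cleaner, since the paper phrases the same counts via the inverse map $\varphi^{-1}$ and a projection $\pi_4$, and it does not explicitly check that the first entry of $\mathrm{type}^2(\Omega)$ equals $N-|\alpha|$, which you do.
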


\begin{proof}
\begin{enumerate}
    \item Recall the inverse mapping:
    \[
    \varphi^{-1}
    \begin{pmatrix}
    k & k & \cdots & k \\
    i & i & \cdots & i \\
    j & j & \cdots & j
    \end{pmatrix}
    = (i-1, j-1, k, \Gamma_{i-1, j-1}^{k}),
    \]
    where $\Gamma_{i-1, j-1}^{k}$ corresponds to the number of columns in the matrix above—that is, the number of times the triple $(k, i, j)$ appears in $\Omega$.

    To prove that $|\Gamma| = N$, we observe:
    \[
    |\Gamma| = \sum_{i=0}^{a} \sum_{j=0}^{b} \sum_{k=0}^{s_{N}} \Gamma_{ij}^{k}
    = \sum_{i=1}^{a+1} \sum_{j=1}^{b+1} \sum_{k=0}^{s_{N}} \pi_4 \circ \varphi^{-1}
    \begin{pmatrix}
        k & k & \cdots & k \\
        i & i & \cdots & i \\
        j & j & \cdots & j
    \end{pmatrix} = N,
    \]
    where $\pi_4$ denotes projection onto the fourth coordinate, returning the value $\Gamma_{i-1, j-1}^{k}$.

    \item To prove that $\Gamma \in Q^{s}$ for $s = s_{N}$, note that the sequence $s_1 \leq s_2 \leq \cdots \leq s_{N}$ implies that $s_{N}$ is the largest value in the first row of $\Omega_{\Gamma}$. Each $s_t$ corresponds to a nonzero entry $\Gamma_{ij}^{k}$ with $k = s_t$, so
    \[
    s_{N} = \max \{ k : \exists (i, j) \text{ such that } \Gamma_{ij}^{k} \neq 0 \}.
    \]
    Therefore, $\Gamma \in Q^{s_{N}}$.

    \item Using Theorem \ref{TeoImportante} and the identity $\sum_{t=1}^{\Gamma_{i-1, j-1}^{k}} s_{t} = k \Gamma_{i-1, j-1}^{k}$, we obtain:
    \[
    \sum_{t=1}^{N} s_t = \sum_{i=0}^{a} \sum_{j=0}^{b} \sum_{k=0}^{s_{N}} k \Gamma_{ij}^{k} = m.
    \]

    \item We know that $type^{2}(\Omega_{\Gamma}) = (u_{1}, u_{2}, \ldots, u_{k})$, where
    \[
    u_i = \sum_{j=1}^{b+1} \sum_{k=0}^{m} \Gamma_{i-1, j-1}^{k} = \sum_{j=0}^{b} \sum_{k=0}^{m} \Gamma_{i, j}^{k}.
    \]
    For $i > 1$, this yields $\sum_{j=0}^{b} \sum_{k=0}^{m} \Gamma_{i, j}^{k} = \alpha_i$, as required. The argument for $type^{3}(\Omega_\Gamma)$ is analogous.
\end{enumerate}
\end{proof}

\subsection*{Acknowledgments} 
The author wishes to thank Rafael Díaz and José Luis Ramírez for their valuable comments, insightful observations and constructive feedback. This work was supported by the Pontificia Universidad Javeriana, Bogotá, Colombia.
\bibliographystyle{plain} 
\bibliography{samplex.bib} 

\end{document}